%%%%%%%% ICML 2022 EXAMPLE LATEX SUBMISSION FILE %%%%%%%%%%%%%%%%%

\documentclass[12pt, nohyperref]{article}
\usepackage{fullpage}
\usepackage{times}
\usepackage[total={6.5in,8.75in},
top=1.2in, left=0.9in, includefoot]{geometry}

% \usepackage{geometry}
% \newgeometry{vmargin={1in}, hmargin={1.25in,1.25in}} 

% \usepackage[title]{appendix}
% \usepackage{natbib}
% \usepackage{arxiv}

\newif\ificml

\icmltrue 
\icmlfalse % or

% Recommended, but optional, packages for figures and better typesetting:
% \usepackage{algorithm,algcompatible}
% \usepackage{algpseudocode}
\usepackage{microtype}
\usepackage{graphicx}
\usepackage{subfigure}
\usepackage[makeroom]{cancel}
\usepackage{booktabs} % for professional tables
\usepackage{makecell}
% hyperref makes hyperlinks in the resulting PDF.
% If your build breaks (sometimes temporarily if a hyperlink spans a page)
% please comment out the following usepackage line and replace
% \usepackage{icml2022} with \usepackage[nohyperref]{icml2022} above.
% \usepackage{hyperref}
\usepackage[colorlinks=true,citecolor=blue]{hyperref}%
\usepackage{multirow}
\usepackage[table, svgnames, dvipsnames]{xcolor}
\usepackage{makecell, cellspace, caption}
\usepackage{booktabs,caption}
\usepackage[flushleft]{threeparttable}

\newcommand{\nosemic}{\renewcommand{\@endalgocfline}{\relax}}% Drop semi-colon ;
\newcommand{\dosemic}{\renewcommand{\@endalgocfline}{\algocf@endline}}% Reinstate semi-colon ;
% Indent
% Undent
\let\oldnl\nl% Store \nl in \oldnl
\newcommand{\nonl}{\renewcommand{\nl}{\let\nl\oldnl}}% Remove line number for one line

% Attempt to make hyperref and algorithmic work together better:

% Use the following line for the initial blind version submitted for review:
% \usepackage{icml2022}

% If accepted, instead use the following line for the camera-ready submission:
% \usepackage[accepted]{icml2022}

% For theorems and such
\usepackage{amsmath, nccmath}
\usepackage{amssymb}
\usepackage{mathtools}
\usepackage{enumitem}
\usepackage{amsthm}
\usepackage{lipsum}

% if you use cleveref..
\usepackage[capitalize,noabbrev]{cleveref}
\allowdisplaybreaks

\usepackage[utf8]{inputenc} 
\usepackage[T1]{fontenc}    
\usepackage{hyperref}       
\usepackage{url}       
\usepackage{booktabs}       
\usepackage{amsfonts}      
\usepackage{nicefrac}      
\usepackage{microtype}
\usepackage{cite}
\usepackage{amsmath,amssymb,amsfonts,amsthm}
\usepackage{algorithmic}
\usepackage{graphicx}
\usepackage{textcomp}
\usepackage{color}
\usepackage{enumitem} 
\usepackage[ruled,vlined,linesnumbered,inoutnumbered]{algorithm2e}
\usepackage{pifont}
\usepackage{bm}
\usepackage{caption}

\newtheorem{theorem}{Theorem}
\newtheorem{corollary}{Corollary}
\newtheorem{lemma}{Lemma}

\newtheorem{assumption}{Assumption}

\theoremstyle{remark}
\newtheorem{remark}{Remark}

\def\o{\omega}
\def\R{\mathbb{R}}
\def\E{\mathbb{E}}
\def\G{\mathcal{G}}
\def\H{\mathcal{H}}

\def\cM{\mathcal{M}}
\def\P{\mathbb{P}}

\def\M{\mathbf{M}}
\def\O{\mathcal{O}}
\def\I{\mathbf{I}}
\def\K{\mathbf{K}}
\def\W{\mathbf{W}}

\def\bH{\mathbf{H}}

\newcommand{\p}{\mathbf{p}}
\newcommand{\x}{\mathbf{x}}
\newcommand{\w}{\mathbf{w}}

\newcommand{\bv}{\mathbf{v}}

\newcommand{\bxi}{\boldsymbol{\xi}}
\newcommand{\bth}{\boldsymbol{\theta}}

\newcommand{\one}{\mathbf{1}}
\newcommand{\ox}{\bar{\mathbf{x}}}

\newcommand{\og}{\overline{\gamma}}
\newcommand{\y}{\mathbf{y}}

\newcommand{\m}{\mathbf{m}}

\newcommand{\N}{\mathcal{N}}
\newcommand{\A}{\mathcal{A}}

\newcommand{\F}{\mathcal{F}}
\newcommand{\inpd}[1]{\langle #1 \rangle}

\newcommand{\argmin}{\mathop{\mathrm{arg\,min}}}

\newcommand{\norm}[1]{\|#1\|}

\newcommand{\abs}[1]{\left| #1 \right|}

\SetKwInput{KwInit}{Initialization}
\SetKwInput{KwOut}{Output}
\allowdisplaybreaks

\newcommand{\cvrg}{\textsc{CLIP-VRG}}

% \newcommand{\st}{\mathrm{~~s.t.~~}}

% The \icmltitle you define below is probably too long as a header.
% Therefore, a short form for the running title is supplied here:
% \icmltitlerunning{Federated Minimax Optimization}

\title{
        Secure Distributed Optimization Under Gradient Attacks
}

\author{Shuhua Yu,
        Soummya Kar\thanks{This work was supported in part by the Office of Naval Research under Grant \# N00014-21-1-2547.}\\
        \\
        Department of Electrical and Computer Engineering,\\
        Carnegie Mellon University, Pittsburgh, PA\\
        \{shuhuay, soummyak\}@andrew.cmu.edu}

\date{October 27, 2022}

\begin{document}
        
        % \onecolumn
        \maketitle

        \vskip 0.3in
        % ]
        
        % this must go after the closing bracket ] following \twocolumn[ ...
        
        % This command actually creates the footnote in the first column
        % listing the affiliations and the copyright notice.
        % The command takes one argument, which is text to display at the start of the footnote.
        % The \icmlEqualContribution command is standard text for equal contribution.
        % Remove it (just {}) if you do not need this facility.
        
        %\printAffiliationsAndNotice{}  % leave blank if no need to mention equal contribution
        
        % \printAffiliationsAndNotice{} 
        % otherwise use the standard text.
        
        \begin{abstract}
% no history, background, motivation, marketing; we consider, we show, minimal mathematical notations.
In this paper, we study secure distributed optimization against arbitrary gradient attack in multi-agent networks. In distributed optimization, there is no central server to coordinate local updates, and each agent can only communicate with its neighbors on a predefined network. We consider the scenario where out of $n$ networked agents, a fixed but unknown fraction $\rho$ of the agents are under arbitrary gradient attack in that their stochastic gradient oracles return arbitrary information to derail the optimization process, and the goal is to minimize the sum of local objective functions on unattacked agents. We propose a distributed stochastic gradient method that combines local variance reduction and clipping (\cvrg). We show that, in a connected network, when unattacked local objective functions are convex and smooth, share a common minimizer, and their sum is strongly convex, \cvrg\ leads to almost sure convergence of the iterates to the exact sum cost minimizer at all agents. We quantify a tight upper bound of the fraction $\rho$ of attacked agents in terms of problem parameters such as the condition number of the associated sum cost that guarantee exact convergence of \cvrg, and characterize its asymptotic convergence rate. Finally, we empirically demonstrate the effectiveness of the proposed method under gradient attacks in both synthetic dataset and image classification datasets.
\end{abstract}

        \section{Introduction}
% framing the paper with some notations, added some history, and explain what is the attack model. Multi-agent decentralize optimization, and the attacked version considered in this work. 
In this paper, we study the problem of secure distributed optimization in peer-to-peer multi-agent networks under arbitrary gradient attacks. In distributed optimization over $n$ networked agents, each agent $i \in [n]$ holds a local objective function $f_i$, has access to stochastic gradients of its local $f_{i}$ via a local and private stochastic gradient oracle, and may only communicate with its direct neighbors defined by an inter-agent communication graph to cooperatively minimize the aggregated objective function $\sum_{i\in[n]} f_i$. The sum-cost minimization and its stochastic variants as described above have emerged as natural abstractions of performing various distributed signal processing and machine learning tasks and seen extensive research over the past decade with the primary focus of building distributed stochastic gradient like procedures based on consensus \cite{nedic2009distributed, kar2012distributed} or diffusion processes \cite{chen2012diffusion} with proven convergence or learning guarantees \cite{nedic2020distributed}. 

This paper studies the adversarial setting where a fixed but unknown $\rho$-fraction agents are under gradient attack in that their stochastic gradient oracles return arbitrary adversarial information when queried during algorithm execution. In distributed network based settings such a scenario arises in which the local cost functions (often reflecting the local data at the agents) are manipulated by an adversary, corresponding to the practical class of data injection attacks.\footnote{In what follows, we will refer to such data injection attacks as gradient attacks as the constructed optimization procedures are based on first order gradient optimization. Although for abstraction purposes we model the attacks as affecting the gradients, it follows from a straightforward inspection of our algorithms that the proposed constructions carry over to other types of attacks that directly manipulate the data or cost functions at the agents.} Denoting by $\mathcal{A}$ the set of agents, unknown apriori, whose gradients are potentially attacked and by $\mathcal{N}$ the non-attacked agents such that $|\mathcal{A}|+|\mathcal{N}|=n$, in the adversarial scenario, instead of minimizing the global aggregate $\sum_{i\in[n]} f_i$, we aim to solve
\begin{align}
\label{eq:dist-mini}
  \text{minimize}_{\x \in \R^d} f(\x) := \frac{1}{\abs{\N}} \sum_{i \in \N} f_i(\x). 
\end{align}
When $\N = [n]$, problem \eqref{eq:dist-mini} reduces to the classical distributed optimization formulation (in non-adversarial environments) as discussed above.

% background and motivation
Many machine learning and statistical inference tasks are currently being implemented via decentralized computation paradigms such as Federated Learning \cite{mcmahan2017communication, li2020federated} instead of the classical single server paradigm, to deal with the scalability, robustness and privacy issues. As server-worker paradigms, where there is a central server to coordinate local model updates, are prone to single server point of failures and bottlenecks, \textit{fully} distributed paradigms that distribute the computation task among multiple entities have gained increasing attention \cite{xin2020general, nedic2020distributed, lian2017can}. However, distributed data and communication bring up data integrity concerns. As many distributed optimization methods are based on stochastic gradient computation, gradient attacks induced by malicious data injection is a serious security concern that needs to be addressed. For examples, the adversary can inject malicious data points to some participating agents in decentralized machine learning training \cite{yu2021resilient}, or corrupt sensor measurements in statistical inference over sensor networks \cite{chen2018resilient, chen2018internet}. In these settings, undefended distributed algorithms can be arbitrarily misled by gradient attack, and thus motivates the formulation \eqref{eq:dist-mini}. 

% most related works 1 with in-context comparison: Byzantine robust decentralized optimization
Our work is closely related to Byzantine-robust distributed optimization. Byzantine attack \cite{lamport1982byzantine} is the most difficult threat model considered in the distributed optimization literature in that Byzantine agents can deviate from the prescribed algorithm and send arbitrarily adversarial messages to its neighbors. In contrast, the threat model considered in this paper assumes that the attacked agents still preserve normal computation and communication capabilities, i.e. still follow the prescribed algorithmic procedures. Our attack model is weaker but also more practical since one may employ cryptographic protocols to test whether agents are Byzantine while it is harder to check the legitimacy of distributed and even heterogeneous data. To couple with Byzantine agents, different approaches to aggregate information from neighbors \cite{wu2022byzantine} have  been proposed. The works \cite{su2020byzantine, sundaram2018distributed, yang2019byrdie, fang2022bridge} use trimmed average of model updates from neighbors, but this approach requires that the majority of non-Byzantine agent's neighbors are also non-Byzantine, while in our threat model we do not have such strict requirements on the distribution of attacked agents over networks. If data are identically and independently distributed (i.i.d.), \cite{guo2021byzantine} proposes to use local data points to evaluate the models communicated from neighbors and only trust those with good performance on local data. Algorithmically, the most related method to \cvrg\ is \textsc{SSCLIP} proposed in \cite{he2022byzantine} that combines local momentum and self-centered clipping on differences between local and incoming models. In contrast, \cvrg\ uses decaying stepsizes to achieve local variance reduction, and the clipping operator in \cvrg\ is applied on gradient estimator themselves instead of differences. More practically, \cvrg\ uses predefined clipping sequences, whereas, \textsc{SSCLIP} relies on information that may not be accessible in a distributed environment for obtaining their clipping thresholds. In addition, 
\cite{peng2021byzantine} proposes a TV-regularized approximation of the Byzantine-free optimization problem along with a subgradient method that converges to a neighborhood of the optimal solution. Exact minimum is also achievable if some redundancy condition and a complete communication graph are assumed \cite{gupta2020fault}. Our ``common minimizer'' assumption is mildly stronger than the redundancy assumption in \cite{gupta2020fault}, but our algorithm applies to general connected network topology and is based on stochastic gradient while \cite{gupta2020fault} requires full gradient. 

% most related works 2 with in-context comparison: resilient distributed inference.
Our work is also related to resilient distributed parameter estimation in terms of threat model. The papers \cite{chen2018resilient, chen2019resilient} consider sensor attack that can arbitrarily manipulate sensor measurements but the computation and communication capabilities of sensors remain intact, which is similar to the gradient attack studied in this work. To counter the sensor measurement attack, a distributed estimator based on saturated local update is proposed in \cite{chen2018resilient, chen2019resilient} which is the first clipping based method to achieve resilience in distributed inference to the best of our knowledge. From an optimization perspective, this method is indeed a gossip-type distributed stochastic gradient method with local gradient clipping. In contrast, \cvrg\ applies to a wider range of distributed optimization models comparing to the de facto least-squares model considered in \cite{chen2018internet, chen2019resilient}. From the algorithmic perspective, \cvrg\ uses constant mixing matrix while \cite{chen2018internet, chen2019resilient} employ a sequence of approximation type mixing matrices following from the \textit{consensus+innovations} framework \cite{kar2013consensus+}, and \cvrg\ further involves local variance reduction for gradient estimation. We refer the reader to \cite{chen2018internet} and references therein for a broader survey on more countermeasures to achieve resilient distributed inference in sensor networks.

% broader history and literature review: decentralized optimization, Byzantine robust distributed optimization with a trustworthy server. 
\textit{Other Related Works}. Distributed optimization has been extensively studied leading to various distributed algorithms \cite{tsitsiklis1986distributed}, including distributed (sub)gradient descent methods \cite{nedic2009distributed, ram2010distributed}, gradient tracking based methods \cite{qu2017harnessing, xin2020general, pu2021distributed}, acceleration \cite{jakovetic2014fast}, variance reduction \cite{shi2015extra}, primal-dual methods \cite{duchi2011dual, chang2014distributed, wang2021distributed}, etc. Distributed optimization has also been studied taking into account different communication topologies \cite{wang2019matcha,ying2021exponential}, compressed communication \cite{koloskova2019decentralized}, data heterogeneity \cite{vogels2021relaysum} and data privacy \cite{cyffers2022muffliato}. Although adversarial robustness of distributed optimization is relatively less studied, in server-worker type setups with a central trustworthy server, several approaches to achieve Byzantine robustness have been proposed. In these approaches the central server employs robust gradient aggregators such as the median \cite{chen2017distributed, blanchard2017machine, guerraoui2018hidden, xie2018zeno}, geometric median \cite{pillutla2022robust}, concentration filtering \cite{alistarh2018byzantine, data2021byzantine}, signSGD \cite{bernstein2018signsgd, li2019rsa}, gradient clipping \cite{karimireddy2021learning}, and worker momentum \cite{el2021distributed}. When the central server also has access to the training data, the server can score the incoming gradients and abandon those abnormal ones \cite{xie2020zeno++, regatti2020bygars}, and may also reach exact minimum by exploiting redundancy \cite{chen2018draco}. In the case that the probability of an agent being Byzantine or trustworthy follows a two-state Markov Chain, \cite{turan2022robust} proposes a method with temporal and spatial robust aggregation, and gradient normalization. In addition, in the decentralized optimization setting with a trustworthy server, Byzantine robustness combined with other challenging constraints have also been studied, including privacy \cite{zhu2022byzantine}, asynchronous decentralized computing \cite{damaskinos2018asynchronous}, and in particular data heterogeneity. Given distributed heterogeneous data, attackers may take advantage of the variance of good workers over time \cite{baruch2019little} making it hard to distinguish Byzantine workers, methods such as bucketing \cite{karimireddy2020byzantine}, RSA \cite{li2019rsa} and concentration filtering \cite{data2021byzantine} have been developed to counteract this issue. 

% preview of main results, contributions
\textit{Main Contributions}. The main contributions of this paper are as follows: (1) We consider an arbitrary gradient attack model that is relatively unexplored in the context of distributed optimization; (2) We develop a distributed stochastic gradient method, i.e.,  \cvrg, that combines local variance reduced gradient estimation and clipping, and analytically and empirically illustrate its robust performance against gradient attacks; (3) For convex and smooth unattacked local objective functions that share a common minimizer, if the sum of unattacked objective functions is strongly convex, we prove that \cvrg\ asymptotically and almost surely converges to the exact minimizer as long as proportion $\rho$ of attacked agents satisfies $\rho < 1/(1 + \kappa)$, where $\kappa$ is the condition number of the aggregated unattacked objective function.

\textit{Notations}. We use $[n] = \{1, 2, \ldots, n\}$ to denote the set of all network agents, and $| \cdot |$ to denote cardinality for an argument set such as $\N$. We use $\norm{\cdot}$ to denote Euclidean norm for vectors and $\norm{\cdot}_2$ for spectral norm of matrices, respectively. We use $\text{diag}(\cdot)$ to denote the diagonal matrix whose diagonal entries are components of the argument vector. We use $\one_p$ to denote the column vector of ones of length $p$, and bold lower case and upper case letters to denote vectors and matrices, respectively. Equality or inequality that involves random variables holds true almost surely. Random variables with superscript or subscript $\omega$ correspond to the sample path $\omega$ in sample space $\Omega$.

\textit{Organizations}. In Section \ref{sec:problem}, we formalize the problem assumptions and discuss their implications. In Section \ref{sec:alg-main}, we develop \cvrg\ and present our main theoretical results. Section \ref{sec:experiments} details the implementations and empirical performance of \cvrg\ for a regularized logistic regression model on both synthetic dataset and image classification datasets. The proofs of the main results are provided in Section \ref{sec:proofs}, whereas, Section \ref{sec:conclusion} concludes the paper.

        \section{Problem setup} 
\label{sec:problem}
Referring to the scenario in (1), recall, by $\A$ we denote the set of agents whose stochastic gradient oracles are arbitrarily manipulated by some adversary. For simplicity, let $a = \abs{\N}, b = \abs{\A}, a + b = n$, and the fraction of attacked agents $\rho = b/n$. Agents aim to minimize $f$ as defined in \eqref{eq:dist-mini} by local computations and communications with neighbors, i.e., in a distributed manner, as is common in the distributed consensus or gossip based computing literature~\cite{nedic2009distributed, kar2012distributed, chen2012diffusion}. Suppose the stochastic gradient oracle on agent $i$ returns $\m_i(\x)$ at query $\x$. We make the following assumption on the stochastic gradient oracles that also formalizes the threat model considered in this work.

\begin{assumption}
\label{as:noise}
Each agent $i \in [n]$ has access to a stochastic gradient oracle that returns $\m_i^t$ at query $\x_i^t$. For $i \in \N, \m_i^t = \nabla f_i(\x_i^t) + \bxi_i^t$ with $\{\bxi_i^t\}_{i \in [n], t \ge 0}$ being mutually independent, $\E(\bxi_i^t) = 0$, and $ \E(\|\bxi_i^t\|^2) \le \sigma^2, \forall i \in [n], t \ge 0$. For $i \in \A$, $\m_i^t$ is arbitrary. The sets $\N$ and $\A$ are fixed but apriori unknown. We further assume that agents in $\mathcal{A}$, although suffering from potential gradient (data) attacks, are otherwise non-adversarial and follow recommended algorithmic protocols as specified. 
\end{assumption}

\begin{remark}
Our gradient model works for general expected risk minimization. In machine learning or statistical inference tasks, $f_i$ can be defined as
\begin{align*}
    f_i(\x) := \E_{\vartheta \sim \mathcal{D}_i} F_i(\x, \vartheta), 
\end{align*}
where $F_i$ is defined on local data samples $\vartheta_i$ that with distribution $\mathcal{D}_i$, and the unattacked stochastic gradient oracles return $\nabla F_i(\x, \vartheta)$. If $f_i$ is defined as a finite sum over data points, then one can still sample stochastic gradients from mini-batches. 

Note that this threat model only involves attack on gradient oracles but attacked agents will still follow the recommended algorithmic procedures, i.e., we assume data injection attacks on a subset of networked agents but otherwise the agents themselves are non-Byzantine. The noise model for regular agents is standard. The gradient attacks are allowed to be arbitrary, which subsumes all specific data injection attack designs.
\end{remark}

% \begin{assumption}
% \label{as:bdd-constraint}
% The constraint $\X \subset \R^d$ is nonempty, convex, and compact with diameter $D$, i.e., $\forall \x, \y \in \X, \norm{\x - \y} \le D$. 
% \end{assumption}

% This assumption encapsulates the uncertainty we have for the solution to our optimization problem. The compactness of constraint is assumed in decentralized optimization literature \cite{nedic2009distributed,duchi2011dual}. 

\begin{assumption}
\label{as:smooth}
For each unattacked agent $i \in \N$, $f_i$ is $L$-smooth, i.e., $\forall \x, \y \in \R^d$, we have
\begin{align*}
    \|\nabla f_i(\x) - \nabla f_i(\y)\| \le L\|\x - \y\|.
\end{align*}
\end{assumption}

\begin{assumption}
\label{as:cvx} 
For each unattacked agent $i \in \N$, $f_i$ is convex and twice differentiable. The average objective on unattacked agents
$f$ is $\mu$-strongly convex, i.e., $\forall \x, \y \in \R^d$,
\begin{align*}
    f(\y) \ge f(\x) + \inpd{\nabla f(\x), \y - \x} + \frac{\mu}{2}\|\y - \x\|^2.  
\end{align*}
Since $f$ is also $L$-smooth by definition \eqref{eq:dist-mini} and Assumption \ref{as:smooth}, we can define the condition number of $f$ as $\kappa := L/\mu$.

\end{assumption}
Note that this strong convexity assumption is made on the average of local objective functions in $\N$ instead of for every single $i \in \N$.

\begin{assumption}
\label{as:zero-gd}
We assume that there exists a common global minimizer $\x^* = \argmin_{\x \in \R^d} f_i(\x)$ for every $i \in \N$.
\end{assumption}

\begin{remark}
Note that we do not assume that $\x^*$ in Assumption \ref{as:zero-gd} is the unique minima of each $f_{i}$ in $\N$. In particular, the individual $f_{i}$'s may have multiple non-overlapping minima that are not minimizers of the global function $f$, and hence the agents need to collaborate to find the minimizer of $f$. Clearly, in the adversarial setting, this task is further complicated by the presence of an adversary that aims to hinder such coordination to converge to a minimizer of the global cost $f$.

This assumption clearly subsumes the case where each agent $i \in \N$ performs the same machine learning tasks on i.i.d. (independently and identically distributed) data distributions by optimizing the same strongly convex such as logistic regression. On the other hand, it also includes the case where data is not i.i.d across agents such as in distributed sensing. For example, in the problem of distributed linear estimation where the environment parameter is globally observable \cite{chen2019resilient}, there exists a unique global minimizer, but the data distribution on different agents can be heterogeneous when agents have different observation matrices, and hence each agent can have minimizer that is not globally optimal. We also elaborate this case by experiments in Section \ref{sec:experiments} part A.
\end{remark}

Distributed network-based algorithms typically involve one information mixing step to aggregate decision variables from neighboring agents, the \emph{neighborhoods} being specified by an inter-agent communication graph $\G$.
\begin{assumption}
\label{as:connected}
The inter-agent communication graph $\G$ is undirected and connected. 
\end{assumption}

Suppose in each step of local computation each agent holds variable $\x_i^{t+1/2}$, we consider fixed nonnegative mixing parameters $w_{ij}$ in update $\x_i^t = \sum_{j =1}^n w_{ij} \x_j^{t+1/2}$. We make the following standard assumption on the mixing matrix $\W$ composed of entries $w_{ij}$.
\begin{assumption} 
\label{as:mixmatrix}
The nonnegative weight matrix $\W$ satisfies that $w_{ij} \neq 0$ only if there is a communication link between agent $i$ and $j$ in graph $\G$, or $i=j$. Further, $\W$ is real symmetric, doubly stochastic, and has eigenvalues $1 = \lambda_1(\W) > \abs{\lambda_2(\W)} \ge \ldots \ge \abs{\lambda_n(\W)}$ with $\beta := \abs{\lambda_2(\W)} \in [0, 1]$.
\end{assumption}

Note that, under Assumption \ref{as:connected}, there exists a $\W$ satisfying Assumption \ref{as:mixmatrix} (see~\cite{dimakis2010gossip}). In particular, in the special case when the graph $\G$ is complete, we may choose $\W = (1/n) \one \one^\top$ which recovers computing in centralized scenarios.

\begin{assumption}
\label{as:resilience-ratio}
The fraction $\rho$ of attacked agents satisfies $\rho < 1/(1 + \kappa)$. 
\end{assumption}
\begin{remark}
We use a simple example to show that $\rho < 1/(1 + \kappa)$ is actually tight for the considered attack model, i.e., recovering the exact minimum is impossible when $\rho \ge 1/(1 + \kappa)$. Suppose a set of $2m$ agents hold the same object function $x^2$ (note $\kappa=1$ in this case), and there exists an algorithm $\cM$ with which each agent can resiliently find the optimal solution $0$ when $m$ agents, i.e., $\rho = 1/(1+\kappa) = 1/2$, are under arbitrary gradient attack. Let every attacked agent simulate objective function $(x - 1)^2$, then 
\begin{align}
\label{eq:attackhypothesis}
    \cM\Big( \underbrace{x^2, \ldots, x^2}_{m \text{  regular agents}}, \underbrace{(x-1)^2, \ldots, (x-1)^2}_{m \text{ attacked agents}} \Big) = 0. 
\end{align}
Now, if we set the local objectives on all $2m$ agents as $(x-1)^2$, and $m$ agents are under arbitrary gradient attack and say they simulate  local objective functions of $x^2$, then, by our hypothesis \eqref{eq:attackhypothesis}, and Assumption \ref{as:noise} that the set of attacked agents are unknown to $\cM$, $\cM$ will lead to solution 0 instead of the true optimizer $1$. Hence, such $\cM$ will not exist and the upper bound in Assumption \ref{as:resilience-ratio} cannot be relaxed.

% We consider a trivial example of majority vote where this upper bound is tight. Suppose $n$ agents are connected through a complete network and each agent $i \in [n]$ holds the same objective function $f_i = (1/2)\|\x\|^2$ with condition number $\kappa = 1$. Agents in $\N$ can locally find the optimal solution $\mathbf{0}$, but for all agents in $\N$ to agree on $\mathbf{0}$ via majority vote, we need to ensure that the fraction of attacked agents is strictly less than $1/2$. 
\end{remark}

% We further develop a family $\cM$ of decentralized algorithms under which the upper bound on $\rho$ in Assumption \ref{as:resilience-ratio} is tight, i.e., $\rho = 1/(1+\kappa)$ can make the solution to \eqref{eq:dist-mini} as bad as the initial point. The input of algorithms in $\cM$ is the initial point $\x_i^0$ and local gradient information, and $\x_i^0 = \x_j^0, \forall i,j \in [n]$. The protocol of an algorithm in $\cM$ is that each agent $i$ first computes a local update  
% \begin{align}
% \label{eq:general_step1}
%     \x_i^{t+1/2} = \x_i^t - \alpha_t \cC_t (\bv_i^t),
% \end{align}
% with step size $\alpha_t$, an time-varying operator $\cC_t$, and $\bv_i^t$ based on gradient information, then agent $i$ progresses as 
% \begin{align}
% \label{eq:general_step2}
%     \x_i^t = \sum_{j = 1}^n w_{ij} \x_j^{t+1/2}
% \end{align}
% with $\W$ satisfying Assumption \ref{as:mixmatrix}. Consider a simple undirected connected network of two agents, agent 1 and agent 2. Suppose agent 2 is under arbitrary gradient attack, and $f_1(\x) = f_2(\x) = \x^\top \x, w_{11} = w_{12} = 1/2$. In this case, $f = f_1$ and $1/(1+\kappa) = 1/2$. One can check that as long as $\cC_t(\bv_2^t) = - \cC_t(\bv_1^t)$, $\x_1^t = \x_1^0$ for all $t \ge 1$. If for any $\x \in \R^d$ there exists $\x' \in \R^d$ such that $\cC_t(\x) + \cC_t(\y) = 0\cdot\one$, then the attacker can make $\x_1^0 = \x_1^t$ for all $t \ge 1$ since the attacker is omniscient and knows $\bv_1^t$. We next in Section \ref{sec:alg-main} propose an algorithm \cvrg\ that is a member of class $\cM$.

        \section{Algorithm development and main results}
\label{sec:alg-main}
We next develop our algorithm \cvrg, see the tabular description Algorithm \ref{alg:rdgd} for details. In distributed computation setup, each agent $i \in [n]$ holds a local decision variable $\x_i^t \in \R^d$ at iteration $t$. Recall that each regular agent $i \in \N$ computes a stochastic gradient $\m_i^t $. By the smoothness assumption in \ref{as:smooth}, if the distance between consecutive iterates converges to $0$, the difference between consecutive true gradients also converges to $0$. Thus, we employ a local recursive averaging scheme to reduce the variance of local gradient estimations on regular agents. To this end, we develop a recursive gradient estimator $\bv_i^t$ computed as
\begin{align}
\label{eq:eta_vr}
    \bv_i^0 = \m_i^0, \ 
    \bv_i^{t+1} = (1 - \eta_t)\bv_i^{t} + \eta_t  \m_i^{t+1}, \forall t \ge 0, 
\end{align}
where
\begin{align}
\label{eq:eta} 
    \eta_t = c_\eta ( t+ \varphi)^{-\tau_\eta} \in (0, 1),
\end{align}
for some positive constants $c_\eta, \tau_\eta$ and positive integer $\varphi$ to be specified.

We use simple clipping to combat arbitrary gradient attack from the adversary. Specifically, we use the following distributed clipped gradient method with decaying clipping threshold, i.e., for each $i \in [n]$,
\begin{align}
\label{eq:local-rdgd}
    \x_i^{t+1} = \sum_{j = 1}^n w_{i j} \big(\x_j^t - \alpha_t k_j^t \bv_j^t \big),
\end{align}
where $w_{ij}$ are entries of matrix $\W$ as in Assumption \ref{as:mixmatrix}, the clipping coefficient $k_i^t$ is defined as
\begin{align*}
     k_i^t = 
     \begin{cases}
     1, & \norm{\bv_i^t} \le \gamma_t, \\
     \gamma_t \norm{\bv_i^t}^{-1}, & \norm{\bv_i^t} > \gamma_t,
     \end{cases}
\end{align*}
and the clipping threshold $\gamma_t$ and stepsizes $\alpha_t$ are defined as
\begin{align}
    & \gamma_t = c_\gamma (t+\varphi)^{-\tau_\gamma} \in (0, 1), \label{eq:gamma} \\
    & \alpha_t = c_\alpha (t + \varphi)^{-\tau_\alpha}\in (0, 1), \label{eq:alpha}
\end{align}
for some positive constants $c_\gamma, \tau_\gamma, c_\alpha, \tau_\alpha$ to be chosen. We outline the procedures of \cvrg\ in Algorithm \ref{alg:rdgd}.

\begin{algorithm}
\SetAlgoLined
\KwIn{$\eta_t, \alpha_t, \gamma_t$.}
\KwInit{$\x_i^0 = \x_j^0, \forall i, j \in [n]$.}
 \For{$t = 0, \ldots, T-1$}{
  \For{agent $i \in [n]$ in parallel}{
  Query stochastic gradient oracle that returns $\m_i^t$\;
  Update $\bv_i^{t} = (1 - \eta_t)\bv_i^{t-1} + \eta_t  \m_i^{t}$\;
  Compute $k_i^t = 
     \begin{cases}
     1, & \norm{\bv_i^t} \le \gamma_t, \\
     \gamma_t \norm{\bv_i^t}^{-1}, & \norm{\bv_i^t} > \gamma_t,
     \end{cases}$\; 
  Send $\x_i^t - \alpha_t k_i^t \bv_i^t$ to all neighbors of agent $i$\; 
  Update $\x_i^{t+1} =  \sum_{j=1}^n w_{ij} \big(\x_j^t - \alpha_t k_j^t \bv_j^t \big)$\;
 }
 }
 \KwOut{$\{\x_i^{T}\}_{i \in [n]}$.}
 \caption{\cvrg} 
 \label{alg:rdgd} %label should always be put after caption
\end{algorithm}

\begin{theorem}
\label{thm:rdgd}
Under Assumptions \ref{as:noise}-\ref{as:mixmatrix}, suppose that $\alpha_t, \gamma_t, \eta_t$ are taken as in \eqref{eq:eta}\eqref{eq:gamma}\eqref{eq:alpha} with $\tau_\eta = 2(\tau_\alpha + \tau_\gamma)/3, 2\tau_\gamma < \tau_\alpha < \min(1, 1 - \tau_\gamma)$. Then, for all $i \in [n]$, for every $0 < \tau < \min(\tau_\gamma, (\tau_\alpha - 2\tau_\gamma)/3)$, we have
\begin{align*}
    \P\left( \lim_{t \rightarrow \infty} (t+1)^{\tau} \norm{\x_i^t - \x^*} = 0 \right) = 1.
\end{align*}
\end{theorem}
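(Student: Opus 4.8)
The plan is to follow three quantities along the trajectory: the network average $\ox^t:=\tfrac1n\sum_{i=1}^{n}\x_i^t$, the disagreement $d_t:=\max_{i\in[n]}\norm{\x_i^t-\ox^t}$, and, for $i\in\N$, the local variance-reduction errors $\e_i^t:=\bv_i^t-\nabla f_i(\x_i^t)$. Averaging \eqref{eq:local-rdgd} over $i\in[n]$ and using double stochasticity of $\W$ (Assumption~\ref{as:mixmatrix}) gives $\ox^{t+1}=\ox^t-\alpha_t\obd^t$ with $\obd^t:=\tfrac1n\sum_{j=1}^{n}k_j^t\bv_j^t$, and the clipping rule forces $\norm{k_j^t\bv_j^t}\le\gamma_t$ for every $j$, hence $\norm{\obd^t}\le\gamma_t$ and, crucially, $\bigl\|\tfrac1n\sum_{j\in\A}k_j^t\bv_j^t\bigr\|\le\rho\gamma_t$ no matter what the adversary injects. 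First I would bound $d_t$ and the $\e_i^t$; then I would derive a two-regime drift inequality for $\norm{\ox^t-\x^*}^2$, in which Assumption~\ref{as:resilience-ratio} is precisely what keeps the drift coefficient positive; finally I would feed this into a stochastic-approximation (Robbins--Siegmund / Chung-type) lemma and combine the result with the consensus bound through $\norm{\x_i^t-\x^*}\le\norm{\ox^t-\x^*}+d_t$.

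\textbf{Consensus and variance reduction.} Since $\x_j^t-\alpha_t k_j^t\bv_j^t$ lies within $\alpha_t\gamma_t$ of $\x_j^t$ and $\W$ contracts disagreement by $\beta=\abs{\lambda_2(\W)}<1$, one has $d_{t+1}\le\beta d_t+2\alpha_t\gamma_t$, and since $\alpha_t\gamma_t$ decays polynomially the elementary fact ``$u_{t+1}\le\beta u_t+b_t$ with $b_t\downarrow0$ polynomially $\Rightarrow u_t=O(b_t)$'' yields $d_t=O\bigl((t+\varphi)^{-\tau_\alpha-\tau_\gamma}\bigr)$ surely; in particular $\norm{\x_i^t-\x_i^{t-1}}=O(\alpha_{t-1}\gamma_{t-1})$. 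From \eqref{eq:eta_vr} and Assumption~\ref{as:noise}, $\e_i^{t}=(1-\eta_t)\e_i^{t-1}+(1-\eta_t)\bigl(\nabla f_i(\x_i^{t-1})-\nabla f_i(\x_i^{t})\bigr)+\eta_t\bxi_i^{t}$; conditioning on the past, using independence and $\E\bxi_i^t=0$, $L$-smoothness (Assumption~\ref{as:smooth}) together with the increment bound for the lag term, and Young's inequality, I would obtain $\E\bigl[\norm{\e_i^t}^2\mid\F_{t-1}\bigr]\le(1-\eta_t)\norm{\e_i^{t-1}}^2+C\bigl(\alpha_{t-1}^2\gamma_{t-1}^2\eta_t^{-1}+\eta_t^2\sigma^2\bigr)$. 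The prescribed $\tau_\eta=\tfrac23(\tau_\alpha+\tau_\gamma)$ is exactly what equalizes the two forcing terms at order $(t+\varphi)^{-\frac43(\tau_\alpha+\tau_\gamma)}$, so the usual recursion estimate gives $\E\norm{\e_i^t}^2=O\bigl((t+\varphi)^{-\tau_\eta}\bigr)$, which a weighted (Chung-type) supermartingale argument upgrades to $(t+\varphi)^{\tau'}\norm{\e_i^t}\to0$ a.s.\ for every $\tau'<\tau_\eta/2=(\tau_\alpha+\tau_\gamma)/3$. Because $\tau_\alpha>2\tau_\gamma$ one has $(\tau_\alpha+\tau_\gamma)/3>\tau_\gamma$, i.e.\ the variance-reduction error is eventually $o(\gamma_t)$; hence $\max_{j\in\N}\norm{\bv_j^t}\le L\max_j\norm{\x_j^t-\x^*}+\max_j\norm{\e_j^t}\le L\norm{\ox^t-\x^*}+o(\gamma_t)$.

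\textbf{Drift inequality and where $\rho<1/(1+\kappa)$ enters.} Expanding,
\begin{align*}
  \norm{\ox^{t+1}-\x^*}^2=\norm{\ox^t-\x^*}^2-2\alpha_t\inpd{\obd^t,\ox^t-\x^*}+\alpha_t^2\norm{\obd^t}^2,\qquad \alpha_t^2\norm{\obd^t}^2\le\alpha_t^2\gamma_t^2,
\end{align*}
and splitting $\obd^t$ over $\N$ and $\A$, the adversarial block contributes $\ge-\rho\gamma_t\norm{\ox^t-\x^*}$. For $j\in\N$, convexity of $f_j$ with $\nabla f_j(\x^*)=0$ (Assumptions~\ref{as:cvx}--\ref{as:zero-gd}) gives $\inpd{\nabla f_j(\ox^t),\ox^t-\x^*}\ge f_j(\ox^t)-f_j(\x^*)\ge0$, so, after replacing $\bv_j^t$ by $\nabla f_j(\ox^t)$ at the cost of the $O\bigl((\tfrac1n\sum_{j\in\N}\norm{\e_j^t}+Ld_t)\norm{\ox^t-\x^*}\bigr)$ error already controlled, and using that a sum of $k_j^t\times(\text{nonnegative})$ is $\ge(\min_{j\in\N}k_j^t)\times(\text{their sum})$,
\begin{align*}
  \tfrac1n\!\sum_{j\in\N}\!k_j^t\inpd{\nabla f_j(\ox^t),\ox^t-\x^*}\ \ge\ \bigl(\min_{j\in\N}k_j^t\bigr)\,\tfrac an\inpd{\nabla f(\ox^t),\ox^t-\x^*}\ \ge\ \bigl(\min_{j\in\N}k_j^t\bigr)\,(1-\rho)\,\mu\norm{\ox^t-\x^*}^2,
\end{align*}
the last step by strong convexity of $f$ (Assumption~\ref{as:cvx}). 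Since $\min_{j\in\N}k_j^t=\min\!\bigl(1,\gamma_t/\max_j\norm{\bv_j^t}\bigr)\ge\min\!\bigl(1,\gamma_t/(L\norm{\ox^t-\x^*}+o(\gamma_t))\bigr)$ by the previous paragraph, the restoring term is $\gtrsim(1-\rho)\mu\norm{\ox^t-\x^*}^2$ when $\norm{\ox^t-\x^*}\lesssim\gamma_t/L$, and $\gtrsim(1-\rho)\tfrac{\mu}{L}\gamma_t\norm{\ox^t-\x^*}=\tfrac{1-\rho}{\kappa}\gamma_t\norm{\ox^t-\x^*}$ otherwise. In the latter (``far'') regime the coefficient $\tfrac{1-\rho}{\kappa}$ of the restoring term must dominate the adversarial coefficient $\rho$, which happens iff $\tfrac{1-\rho}{\kappa}>\rho$, i.e.\ iff $\rho<1/(1+\kappa)$ --- this is exactly Assumption~\ref{as:resilience-ratio}, and the factor $1/\kappa=\mu/L$ is the unavoidable price of clipping, which caps each regular gradient (possibly as large as $L\norm{\ox^t-\x^*}$, since only the sum $f$, not the individual $f_j$, is strongly convex) at $\gamma_t$ while the net restoring strength is only $\mu\norm{\ox^t-\x^*}$; this mirrors the indistinguishability obstruction in the remark following Assumption~\ref{as:resilience-ratio}. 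In the ``near'' regime Young's inequality absorbs the adversarial term into the $\mu\norm{\ox^t-\x^*}^2$ drift at the cost of an $O(\gamma_t^2)$ residual. Collecting everything, for suitable $c_1,c_2>0$,
\begin{align*}
  \norm{\ox^{t+1}-\x^*}^2\ \le\ \norm{\ox^t-\x^*}^2-c_1\alpha_t\min\!\bigl(\gamma_t\norm{\ox^t-\x^*},\,\norm{\ox^t-\x^*}^2\bigr)+c_2\alpha_t\gamma_t^2+\alpha_t^2\gamma_t^2+\alpha_tE_t,
\end{align*}
with $E_t=O\bigl((\gamma_t+\norm{\ox^t-\x^*})(d_t+\tfrac1n\sum_{j\in\N}\norm{\e_j^t})\bigr)$.

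\textbf{Conclusion and main obstacle.} Because $\tau_\alpha<1-\tau_\gamma$ we have $\sum_t\alpha_t\gamma_t=\infty$, so in the ``far'' regime the drift $-c_1\alpha_t\gamma_t\norm{\ox^t-\x^*}$ brings $\norm{\ox^t-\x^*}$ down to the $\gamma_t$-scale in finite (random) time, and in the ``near'' regime the recursion $\norm{\ox^{t+1}-\x^*}^2\le(1-c_1\alpha_t)\norm{\ox^t-\x^*}^2+O(\alpha_t\gamma_t^2)+\alpha_tE_t+\alpha_t^2\gamma_t^2$ keeps it there; since $\alpha_tE_t=o(\alpha_t\gamma_t^2)$ a.s.\ by the first two paragraphs and $\sum_t\alpha_t=\infty$, I would multiply through by $(t+\varphi)^{2\tau}$ and apply a Robbins--Siegmund / Chung-type lemma to conclude $\P\bigl(\lim_t(t+\varphi)^{2\tau}\norm{\ox^t-\x^*}^2=0\bigr)=1$ for every $\tau<\tau_\gamma$; the extra requirement $\tau<(\tau_\alpha-2\tau_\gamma)/3$, equivalently $\tau+\tau_\gamma<\tau_\eta/2$, is what keeps the rescaled variance-reduction contribution $(t+\varphi)^{2\tau}\alpha_tE_t$ negligible in that lemma (it guarantees the error is $o(\gamma_t)$ even when measured against the equilibrium scale $\gamma_t$). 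Finally $\norm{\x_i^t-\x^*}\le\norm{\ox^t-\x^*}+d_t$ with $d_t=O\bigl((t+\varphi)^{-\tau_\alpha-\tau_\gamma}\bigr)=o\bigl((t+\varphi)^{-\tau}\bigr)$, which is the stated conclusion. I expect the main obstacle to be the drift step: extracting through the clipping nonlinearity the \emph{sharp} restoring coefficient $\tfrac{1-\rho}{\kappa}-\rho$ while only the sum $f$ (not the individual $f_j$) is strongly convex, together with carrying every estimate --- consensus, variance reduction, drift --- all the way to almost-sure statements that retain the polynomial rate.
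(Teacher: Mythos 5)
Your proposal is correct and follows essentially the same route as the paper's proof: a consensus contraction giving $d_t=O(\alpha_t\gamma_t)$, a conditional recursion for the variance-reduction error upgraded to an almost-sure polynomial rate via a Chung-type supermartingale lemma, a two-regime drift analysis (an absorbing $O(\gamma_t/L)$ neighborhood where no regular agent is clipped, versus a far regime where a uniform lower bound on the clipping coefficients yields the restoring coefficient $\mu(1-\rho)-\rho L>0$), and a final triangle inequality with the consensus bound. The only cosmetic difference is that you justify replacing each $k_j^t$ by its common lower bound via nonnegativity of the convexity inner products $\inpd{\nabla f_j(\ox^t),\ox^t-\x^*}\ge 0$, whereas the paper uses twice-differentiability and the mean value theorem to write $\nabla f_i(\ox^t)=\M_i^t(\ox^t-\x^*)$ with $\M_i^t\succeq 0$ and argues through eigenvalue subadditivity.
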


\begin{corollary}
\label{cor:funcsub}
Under Assumptions \ref{as:noise}-\ref{as:mixmatrix}, we can take $\tau_\alpha, \tau_\gamma, \tau_\eta$ in Theorem \ref{thm:rdgd} to achieve that for any $i \in [n]$, any $\epsilon$ with $0 < \epsilon < 1/3$, almost surely, 
\begin{align*}
    \lim_{t \rightarrow \infty}(t+1)^{1/3-\epsilon}\big(f(\x_i^t) - f(\x^*)\big) = 0.
\end{align*}
\end{corollary}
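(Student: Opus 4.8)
The plan is to derive Corollary~\ref{cor:funcsub} directly from Theorem~\ref{thm:rdgd}: convert the iterate convergence rate into a function-value rate via $L$-smoothness, and then choose the free exponents $\tau_\alpha,\tau_\gamma,\tau_\eta$ so that the achievable rate exponent can be pushed arbitrarily close to $1/3$.

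First I would record the elementary facts about $f$. Since each $f_i$, $i\in\N$, is convex and twice differentiable with common minimizer $\x^*$ (Assumptions \ref{as:cvx}--\ref{as:zero-gd}), we have $\nabla f_i(\x^*)=0$, hence $\nabla f(\x^*)=0$; moreover $f$ is $L$-smooth by \eqref{eq:dist-mini} and Assumption~\ref{as:smooth}, and $\mu$-strongly convex, so $\x^*$ is its unique minimizer. Applying the standard quadratic upper bound for an $L$-smooth function at $\x^*$ gives, for every $i\in[n]$ and $t\ge 0$,
\begin{align*}
0 \le f(\x_i^t) - f(\x^*) \le \frac{L}{2}\,\norm{\x_i^t - \x^*}^2 .
\end{align*}
Multiplying by $(t+1)^{2\tau}$ yields $(t+1)^{2\tau}\big(f(\x_i^t)-f(\x^*)\big) \le \tfrac{L}{2}\big((t+1)^{\tau}\norm{\x_i^t-\x^*}\big)^2$, so Theorem~\ref{thm:rdgd} immediately implies $(t+1)^{2\tau}\big(f(\x_i^t)-f(\x^*)\big)\to 0$ almost surely for every $\tau<\min\big(\tau_\gamma,(\tau_\alpha-2\tau_\gamma)/3\big)$, under the admissibility constraints $\tau_\eta=2(\tau_\alpha+\tau_\gamma)/3$ and $2\tau_\gamma<\tau_\alpha<\min(1,1-\tau_\gamma)$.

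It then remains to optimize $\min\big(\tau_\gamma,(\tau_\alpha-2\tau_\gamma)/3\big)$ over the admissible region so that $2\tau$ can be made arbitrarily close to $1/3$, i.e., so that this quantity can approach $1/6$. Balancing the two terms by setting $\tau_\alpha=5\tau_\gamma$ makes both equal to $\tau_\gamma$; the binding constraint $\tau_\alpha<1-\tau_\gamma$ then reads $\tau_\gamma<1/6$, while $2\tau_\gamma<\tau_\alpha$ and $\tau_\alpha<1$ hold automatically, and $\tau_\eta=2(\tau_\alpha+\tau_\gamma)/3=4\tau_\gamma$ satisfies the stated relation. Hence, given any $\epsilon\in(0,1/3)$, I would take $\tau_\gamma\in(1/6-\epsilon/4,\,1/6)$, $\tau_\alpha=5\tau_\gamma$, $\tau_\eta=4\tau_\gamma$, and $\tau=1/6-\epsilon/2$, so that $0<\tau<\tau_\gamma=\min\big(\tau_\gamma,(\tau_\alpha-2\tau_\gamma)/3\big)$ and $2\tau=1/3-\epsilon$. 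Combining with the displayed bound gives $(t+1)^{1/3-\epsilon}\big(f(\x_i^t)-f(\x^*)\big)\to 0$ almost surely, as claimed.

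There is essentially no hard step: the argument is a one-line smoothness inequality composed with Theorem~\ref{thm:rdgd}. The only point requiring care is the constrained optimization of the exponent $\min\big(\tau_\gamma,(\tau_\alpha-2\tau_\gamma)/3\big)$ — one must verify that the balanced choice $\tau_\alpha=5\tau_\gamma$ is feasible for the whole open interval $\tau_\gamma\in(0,1/6)$ and that the induced $\tau_\eta$ meets the hypotheses of Theorem~\ref{thm:rdgd}, which explains why $1/3$ appears only as a supremum and the loss $\epsilon>0$ cannot be removed.
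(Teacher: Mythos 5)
Your proposal is correct and follows essentially the same route as the paper (Remark~\ref{rm:rate}): the quadratic upper bound $f(\x_i^t)-f(\x^*)\le \tfrac{L}{2}\norm{\x_i^t-\x^*}^2$ from $L$-smoothness and $\nabla f(\x^*)=0$, combined with maximizing $\min\big(\tau_\gamma,(\tau_\alpha-2\tau_\gamma)/3\big)$, whose supremum $1/6$ doubles to the $1/3$ rate. Your handling of the boundary — taking $\tau_\alpha=5\tau_\gamma$ with $\tau_\gamma$ strictly below $1/6$ rather than the paper's nominal assignment $\tau_\alpha=5/6,\tau_\gamma=1/6$ — is a slightly more careful rendering of the same argument, since the theorem's constraints are strict inequalities.
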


\begin{remark}
\label{rm:rate}
Theorem \ref{thm:rdgd} states that asymptotically, the algorithm iterates $\x_i^t$ of any agent $i$ almost surely converges to the exact minimum of $f$. The convergence rate is sublinear and depends on $\tau_\alpha, \tau_\gamma$. To obtain Corollary \ref{cor:funcsub} from Theorem \ref{thm:rdgd} we solve a linear program involving $\tau_\alpha, \tau_\gamma$ to maximize $\min(\tau_\gamma, \tau_\alpha/3 - 2\tau_\gamma/3)$ under the constraints $2\tau_\gamma \le \tau_\alpha \le \min(1, 1 - \tau_\gamma) $, which leads to the optimal assignment $\tau_\alpha = 5/6, \tau_\gamma = 1/6$. Thus, we can achieve a convergence rate that is arbitrarily close to $\O(t^{-1/6}) $ for $\norm{\x_i^t - \x^*}$. Since $f$ is also $L$-smooth, using a standard descent lemma \cite{beck2017first}, we have $\forall \x, \y \in \R^d$, 
\begin{align*}
    f(\y) \le f(\x) + \inpd{\nabla f(\x), \y - \x} + \frac{L}{2}\norm{\y - \x}^2.
\end{align*}
Taking $\x = \x^*$ and using the fact that $\nabla f(\x^*)  = 0$ that follows from Assumption \ref{as:cvx}-\ref{as:zero-gd}, we have $f(\x_i^t) - f(\x^*) \le (L/2)\norm{\x_i^t - \x^*}^2$, and thus the convergence rate for $f(\x_i^t) - f(\x^*)$ can be arbitrarily close to $\O(t^{-1/3})$. Note that our convergence rate is established in the almost sure sense, while most existing works in distributed optimization prove mean square convergence, with the exception of \cite{xin2020variance} (see also references therein). The almost sure convergence is beneficial in that it provides convergence guarantee for nearly every algorithm sample path instance. 

Note that \cvrg\ does pay a price in convergence rate for the sake of security. In a one machine setup, the almost sure convergence rate of stochastic gradient descent for strongly convex and smooth objective function can be arbitrary close to $\O(t^{-1})$ \cite{sebbouh2021almost}. In the distributed setup with no gradient attack, \cite{xin2020variance} also proves almost sure convergence arbitrarily close to $\O(t^{-1})$ for distributed stochastic gradient with the aid of gradient tracking.

Finally, we point out some technical differences with respect to \cite{chen2019resilient} that also uses decaying thresholds to achieve perfect recovery in distributed estimation. A key difference in our construction is the use of an additional gradient estimator (locally at each agent) based on instantaneous stochastic gradients. Intuitively, these gradient estimators lead to asymptotically decaying gradient variance which is required for convergence as the clipping operation induces additional nonlinearities. By leveraging the special form of the linear regression type cost models studied in \cite{chen2019resilient}, this step was essentially bypassed in lieu of a simple arithmetic mean of gradients which is not applicable in the current scenario. The addition of the non-trivial gradient estimators in turn necessitate different choices of the algorithm parameters (weight sequences and thresholds) and hence new proof techniques. However, although \cvrg\ applies to more general convex models, we point out that the SAGE algorithm developed in~\cite{chen2019resilient} is optimized for linear regression models and for such setups yields a $\O(t^{-1/4})$ almost sure convergence rate of the agent iterates to the true minimizer, which is better than the $\O(t^{-1/6})$ rate we obtain for more general convex models in the adversarial setting.
\end{remark}

        \section{Experiments}
\label{sec:experiments}
We focus on comparing the performance of \cvrg\ with distributed stochastic gradient descent (DSGD), the most common construction employed in non-adversarial settings. The DSGD algorithm we implement is adapted from the diffusion variant studied in \cite{chen2013distributed}, i.e., each agent in parallel updates its local estimate $\x_i^t$ as follows,
% distributed gradient descent in \cite{nedic2009distributed} and the diffusion strategy (adapt-then-combine) in \cite{chen2013distributed}, i.e., each agent in parallel updates local estimate $\x_i^t$ as follows,
\begin{align}
\label{eq:dsgd}
    \x_i^{t+1} = \sum_{j=1}^n w_{ij} (\x_j^t - \alpha_t \m_j^t).
\end{align}
Most recent resilient distributed optimization algorithms are developed for networks with Byzantine agents that has very different adversarial behaviors in communications and computations with agents under our attack model, so we narrow down our comparisons only with DSGD.

\subsection{Distributed Heterogeneous Measurements}
% problem setup
In this synthetic example, we demonstrate the effectiveness of \cvrg\ in a distributed heterogeneous measurement model. We consider an undirected 2D grid network that consists of 625 agents as shown in Fig \ref{fig:grid_network}, and as indicated by inter-agent links, each agent can only communicate with its direct neighbors or agents at its diagonal position. In Fig \ref{fig:grid_network}, green nodes represent regular agents in $\N$ while black nodes are agents in $\A$ that have adversarial stochastic gradient oracles (arbitrarily corrupted sensor measurements). The network of agents aims to estimate a long vector of 625 environment parameters $\bth_*$ with each component of $\bth_*$ corresponding to the true scalar environment parameter at the location of each agent. However, each agent only has noisy measurements on environment parameters at positions within distance of 5 units (the side length of each cell in the grid is 1 unit), so these agents need to collaborate to estimate $\bth_*$ that has network-wide information. Specifically, for each agent $i \in \N$, we consider the following measurement model,
\begin{align*}
    \y_i^t = \bH_i\bth_* + \w_i^t, 
\end{align*}
where each row of measurement matrix $\bH_i$ is a canonical basis vector of length 625 that measures one component of $\bth_*$ and $\{\w_i^t\}_{t \ge 0}$ are i.i.d. zero mean Gaussian noises. The sensing matrix $\bH_i$ is defined to enable agent $i$ to measure all components of $\bth_*$ that are in positions within distance of 5 units from agent $i$. For example, for agent $i$ at the center of the grid, $\bH_i$ has 46 rows, but if agent $i$ is at the corner of the grid then $\bH_i$ has 26 rows.
\begin{figure}[ht]
\centering
\captionsetup{justification=centering}
 \includegraphics[width=0.35\textwidth]{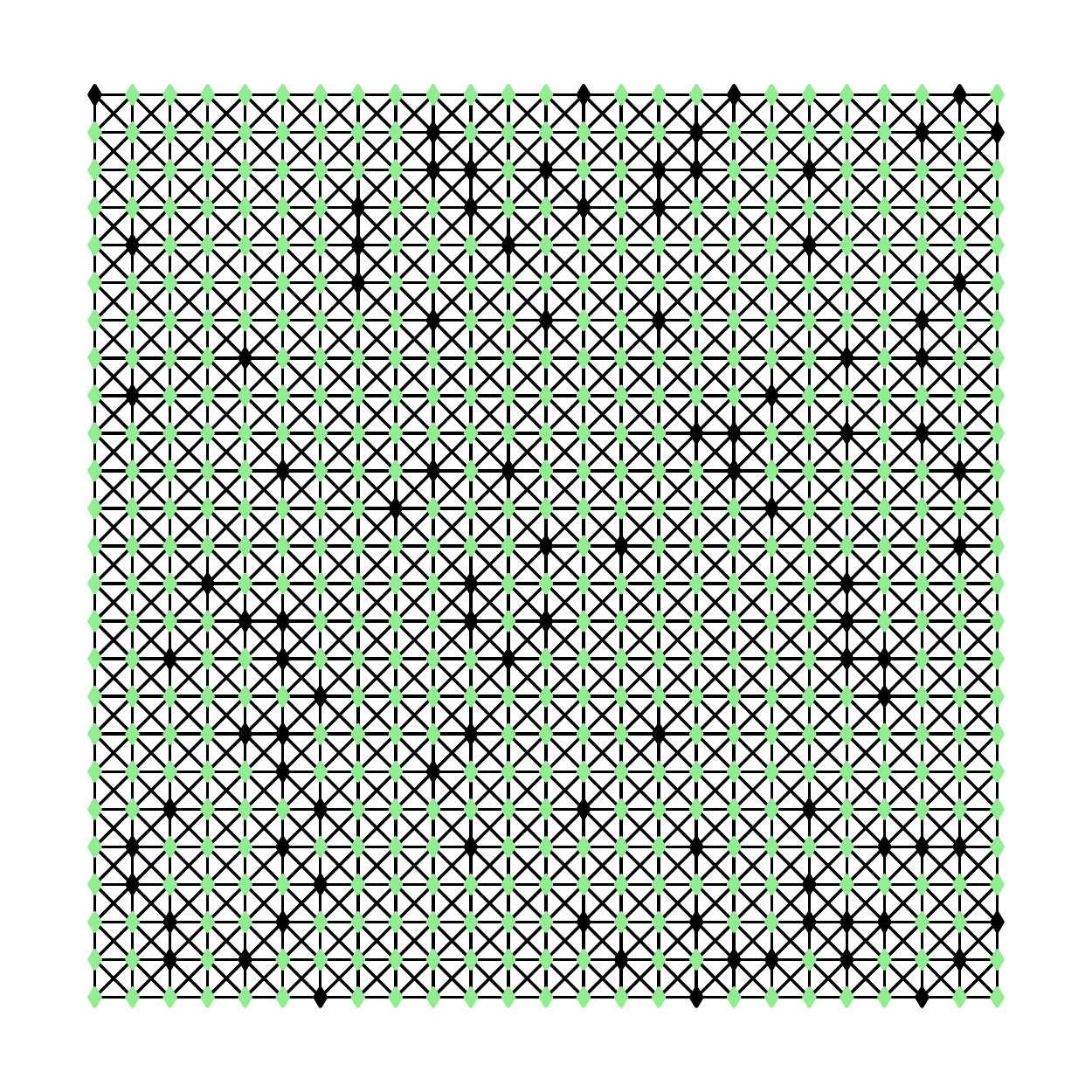}
\caption{2D grid network of 625 agents}
\label{fig:grid_network}
\end{figure} 
To recover the true parameter vector $\bth_*$, we formulate an $\ell_2$ loss minimization problem over regular agents $\N$, 
\begin{align}
\label{eq:recover_obj}
    \text{minimize}_{\x \in \R^{625}} \sum_{i \in \N} \E_{\w_i} \norm{\bH_i \x - \y_i}^2. 
\end{align}
In this example, as shown in Fig \ref{fig:grid_network}, we randomly sample 100 agents that are under gradient attack. In our setting, each local objective function 
\begin{align*}
    f_i = \E_{\w_i} \| \bH_i \x  - \y_i \|^2
\end{align*}
is convex and smooth, the true aggregated objective function $\sum_{i\in \N} f_i$ is strongly convex 
with condition number $\kappa \approx 4.35$, and $\bth_*$ is an optimal solution for each $f_i$. Per Assumption \ref{as:resilience-ratio}, the fraction of attacked agents in this experiment is less than $625/(1 + \kappa) \approx 117$. We sample $\bth_*$ from $[-40, 180]^{625}$ (range of temperature), and design the variance of Gaussian noise as 10.  In our formulation, stochastic gradients only have nonzero entries corresponding to components being measured. For example, the stochastic gradients computed by an agent at the corner of the grid only have 26 nonzero components. To simulate gradient attack, for agents in $\A$, we set the nonzero components of their gradients as $-200$ persistently.

% algorithms and paras
We carefully tuned both DSGD and \cvrg. For DSGD, we pick $\alpha_t = 22/(t+1)$, and for \cvrg\ we choose
$\alpha_t = 220(t+1)^{-0.82}, \gamma_t = 600(t+1)^{-0.17}, \eta_t = 7(t+1)^{-0.66}$
% \begin{align*}
%     \alpha_t = \frac{220}{(t+1)^{0.82}}, \gamma_t = \frac{600}{(t+1)^{0.17}}, \eta_t = \frac{7}{(t+1)^{0.66}}, 
% \end{align*}
and both algorithms are initialized from the zero vector. For both implementations, we use Metropolis weights \cite{xiao2005scheme} as mixing matrix $\W$. We compare the maximum $\ell_2$ estimation error $(1/625)\max_{i\in[n]}\| \x_i^t - \bth_*\|_2$ of DSGD and \cvrg\ in Fig \ref{fig:perf_linear}. Under the considered gradient attack, DSGD fails to converge to the true parameter $\bth_*$ and even diverge after some iterations while \cvrg\ resiliently minimize the $\ell_2$ error towards 0.
\begin{figure}[ht]
\centering
\includegraphics[width=0.45\textwidth]{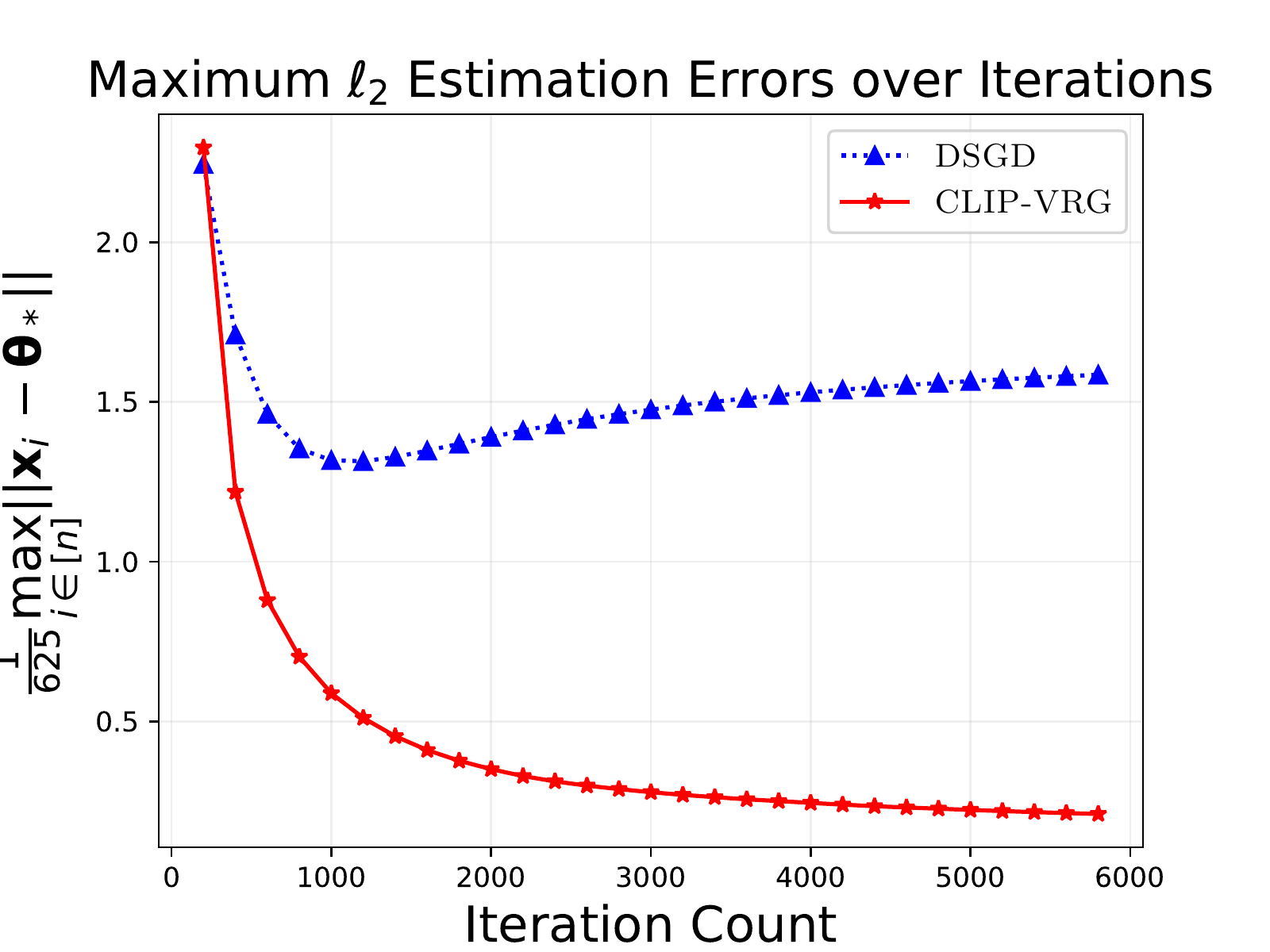}
\caption{Maximum $\ell_2$ estimation errors comparison}
\label{fig:perf_linear}
\end{figure}

\subsection{Distributed Binary Classification}
In this experiment we use real-world image classification datasets to test the efficacy of \cvrg. We study a scenario where each networked agent solves the same empirical risk minimization formulation for a binary classification task to simulate distributed learning or inference on homogeneous data. Specifically, each agent has the same dataset $\{\bth_i, \xi_i\}$ and tries to minimize a regularized logistic regression objective
\begin{align*}
    \ell(\x, \{\bth_i, \xi_i\}_{i = 1, \ldots, n}) = \frac{1}{n} \sum_{i=1}^n \ln\big(1 + e^{-\x^\top\bth_i   \xi_i}\big) + \frac{\lambda}{2}\|\x\|_2^2,
\end{align*}
where $\bth_i$ denotes the feature vector of $i$th data point, $\xi_i$ is its corresponding binary label in $\{-1, 1\}$, and $\lambda$ is a regularization parameter to control overfitting. We perform this experiment on two graph topologies with different datasets. 

In our first experiment setup, we consider an undirected geometric random graph of 100 agents among which 25 agents are under arbitrary gradient attack (See Fig \ref{fig:geo_graph_mnist}, black nodes represent attacked agents). Each agent has access to the same Fashion-MNIST dataset \cite{chang2011libsvm}, and use data points with labels "pullover" and "coat". Each label has 5000 training data points and 2000 test data points, and each agent solves the same regularized logistic regression formulation with $\lambda=0.1$ on training data. For regular agents in $\N$, mini-batch stochastic gradients are sampled at each iteration, but for attacked agents in $\A$, at each iteration their gradient oracles persistently return $c\one_{784}$ for constant $c \approx 0.714$. In this problem setting, we can estimate the upper bound on the fraction of attacked agents in Assumption \ref{as:resilience-ratio} is $1/(1 + \kappa) \ge 0.26$, which is larger than the actual fraction of the attacked agents $0.25$. Note that Assumptions \ref{as:smooth}-\ref{as:resilience-ratio} are all satisfied in this experiment setup, except that Assumption \ref{as:noise} is not necessarily satisfied since regularized logistic regression may have unbounded gradient when $\x$ is unbounded. In Figure \ref{fig:geo_graph_mnist}, we compare the performance of DSGD and \cvrg\ in terms of averaged test accuracy on test data points and the averaged optimality gap of training loss, i.e., $\ell(\x^t, \{\bth_i, \xi_i\}_{i = 1, \ldots, n}) - \ell(\x_*, \{\bth_i, \xi_i\}_{i = 1, \ldots, n})$, over all agents. Under the aforementioned attack, DSGD fails to reach the optimal solution with high residual errors and fails to obtain a workable classifier. In contrast, \cvrg\ resiliently optimizes the loss function with a minor sacrifice in the convergence speed and final optimality precision, and the test accuracy is as high as in unattacked cases. Moreover, in the case without attack, \cvrg\ shows no comprise in performance to achieve security guarantees at the same time.
\begin{figure*}
\centering
\includegraphics[width=0.325\textwidth]{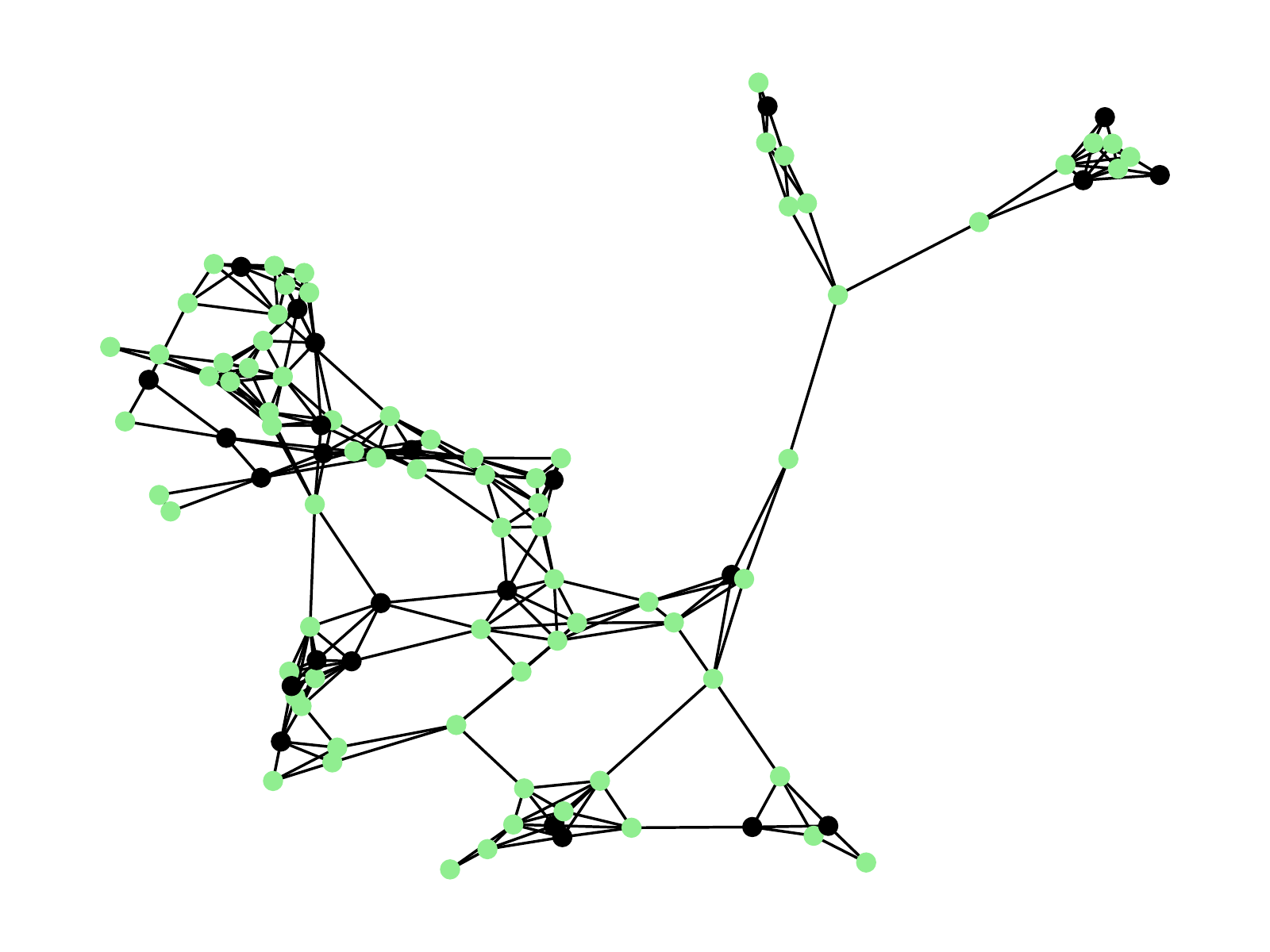}
\includegraphics[width=0.325\textwidth]{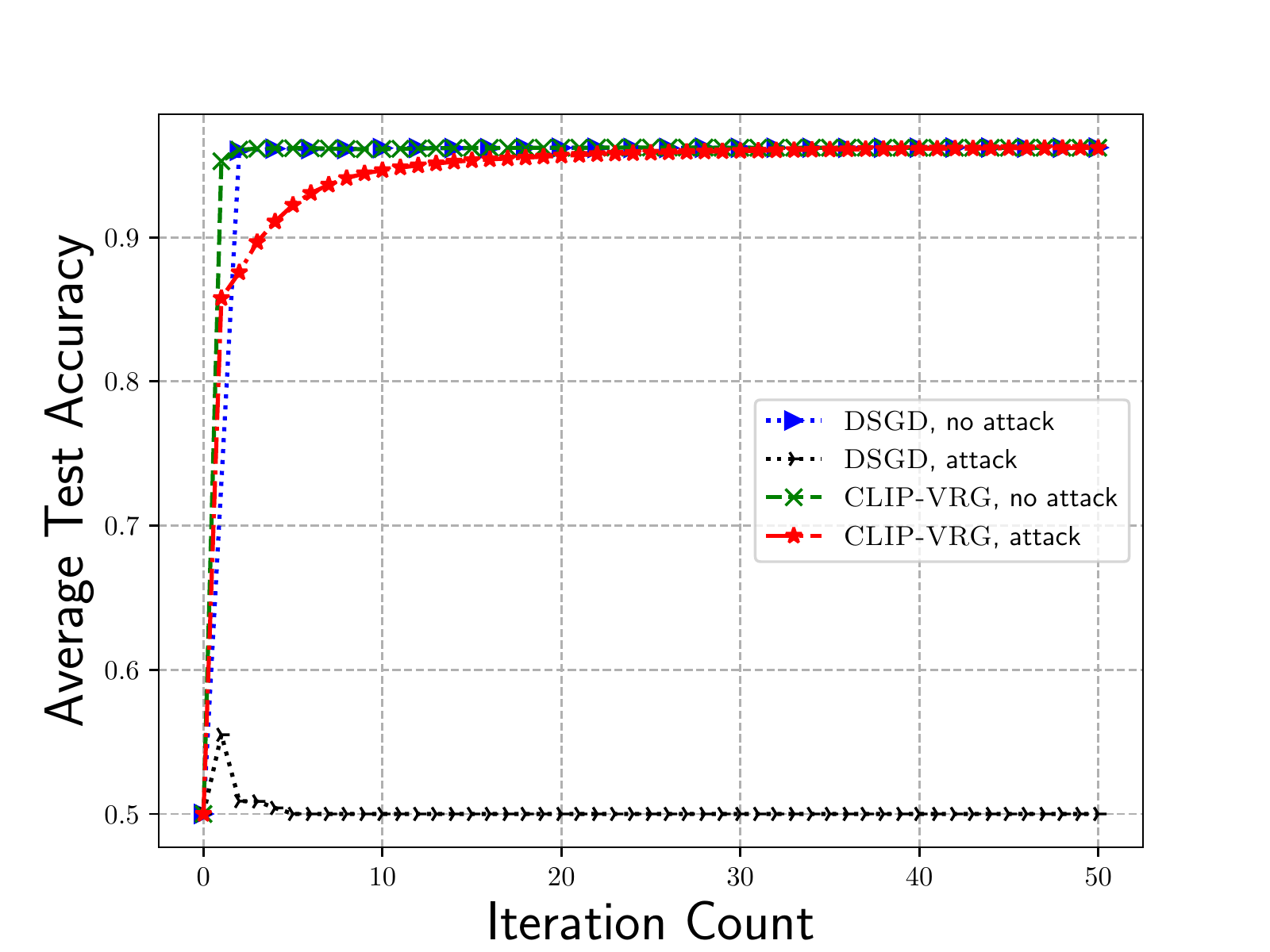}
\includegraphics[width=0.325\textwidth]{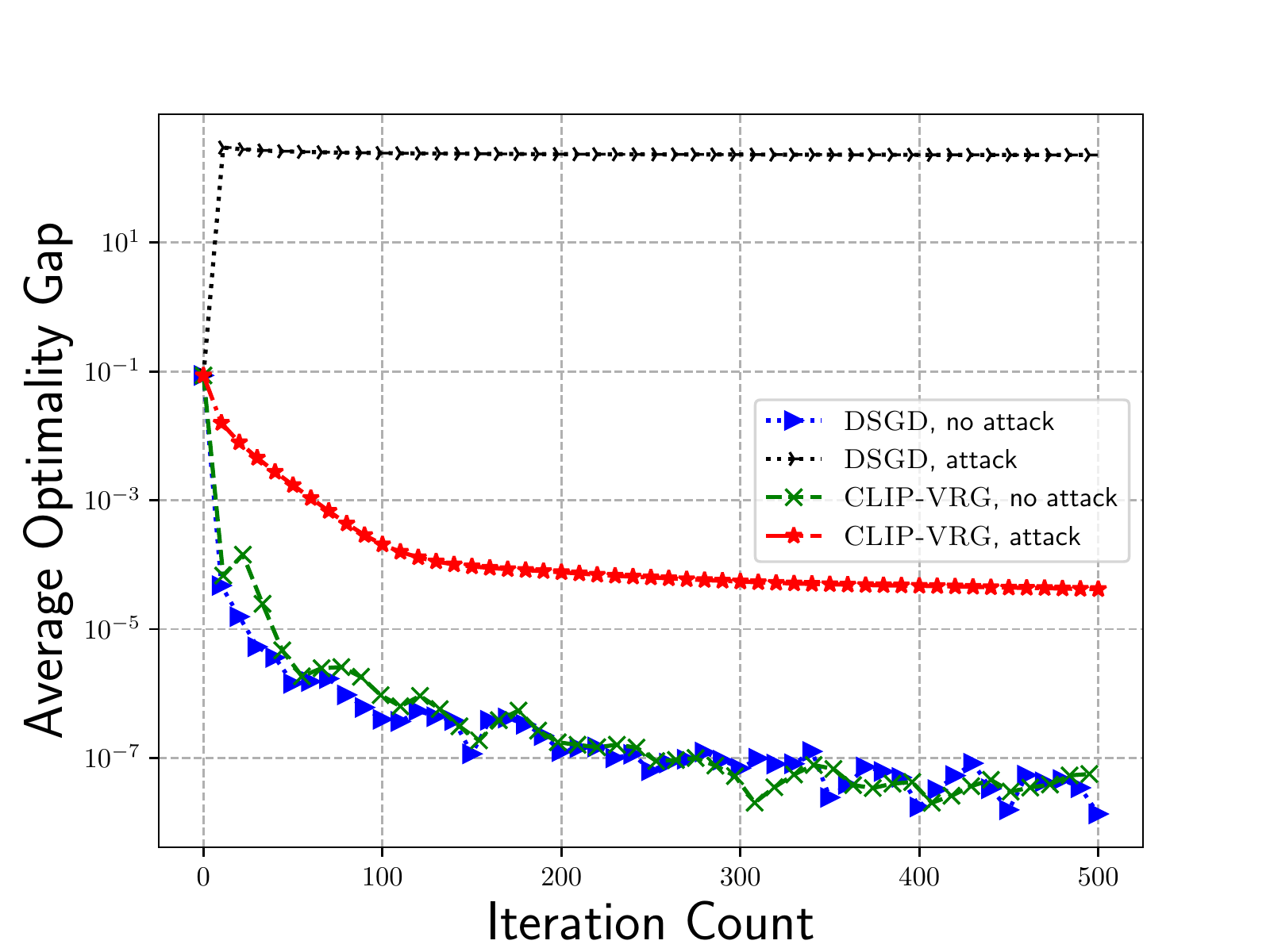}
\caption{An undirected random geometric graph of 100 agents. Performance comparison of DSGD and \cvrg\ in case of no gradient attack and persistent gradient attack on Fashion-MNIST dataset.} 
\label{fig:geo_graph_mnist}
\end{figure*}

In our second experiment setup, we consider a simpler topology, a connected cycle of 15 agents where each agent has 6 neighbors, and 3 agents are under arbitrary gradient attack (see Fig \ref{fig:cycle_graph_cifar10}, black agents represent attacked agents). Each agent has access to the same CIFAR-10 \cite{chang2011libsvm} dataset, which is harder to train for a logistic regression model compared to Fashion-MNIST. Similarly, we choose data points with labels "cat" and "dog" to define the regularized logistic regression loss in training data points, and compare the performance of DSGD and \cvrg\ in terms of averaged test accuracy and averaged optimality gap of training loss. In this setup, we choose regularization parameter $\lambda = 10^{-3}$, and each attacked agent receives persistent gradient $c\one_{3072}$ for constant $c \approx 0.18$. In Fig \ref{fig:cycle_graph_cifar10}, under gradient attack, \cvrg\ can still resiliently optimize the loss function towards optimal with a slower convergence speed, and achieves comparable test accuracy with unattacked case after some more iterations. In contrast, DSGD fails to make progresses towards optimizing the loss and training a meaningful classifier. However, in the case without gradient attack, \cvrg\ does pay a minor price in performance to achieve security guarantees. This validates our discussions in Remark \ref{rm:rate} that the best achievable almost sure convergence rate of \cvrg\ is inferior with respect to the theoretically achievable in non-adversarial environments. (In this experiment, we use the optimized $\tau_\alpha, \tau_\gamma$ in Remark \ref{rm:rate}.)

\begin{figure*}%[ht]
\centering
\includegraphics[width=0.325\textwidth]{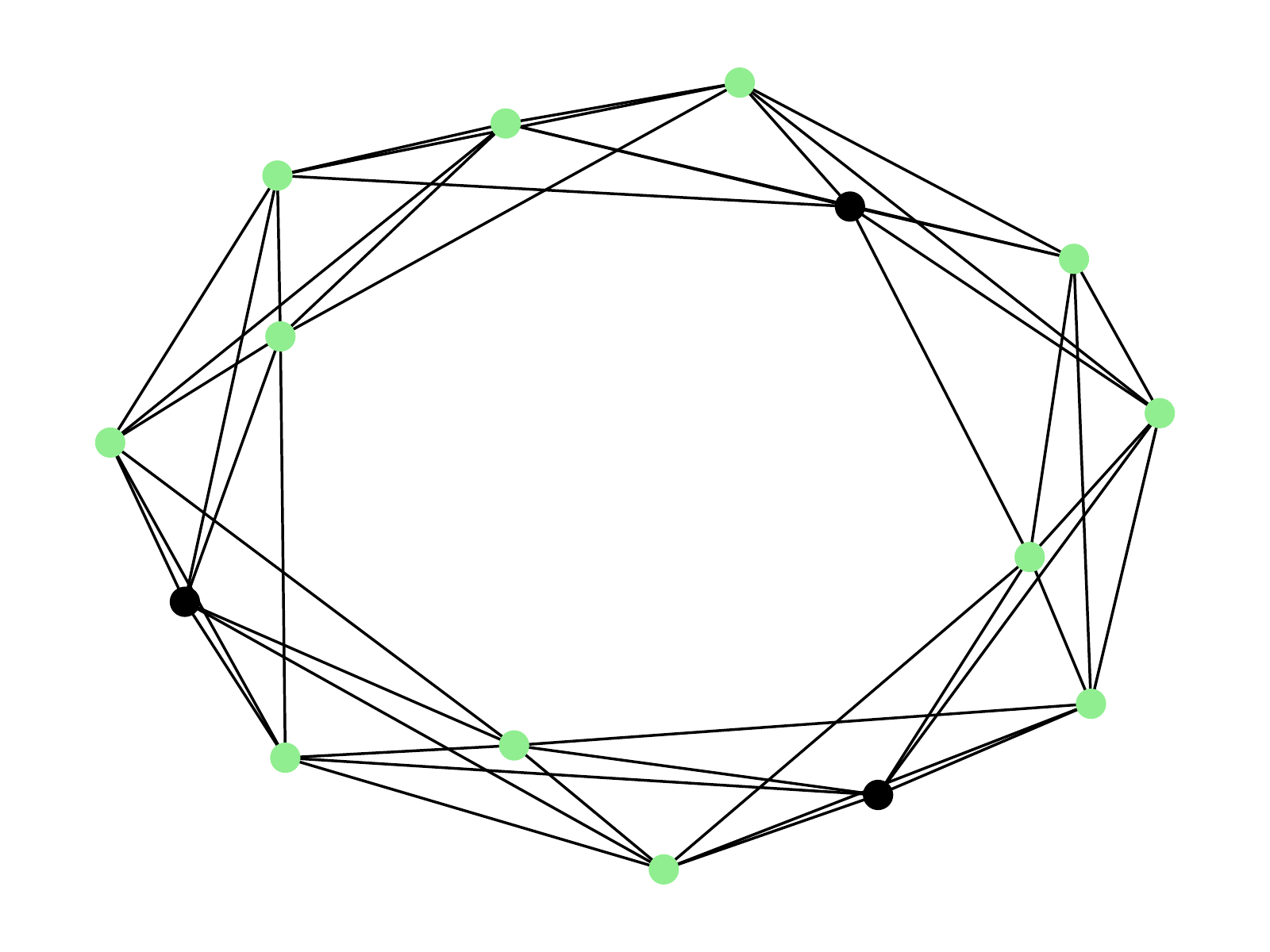}
\includegraphics[width=0.325\textwidth]{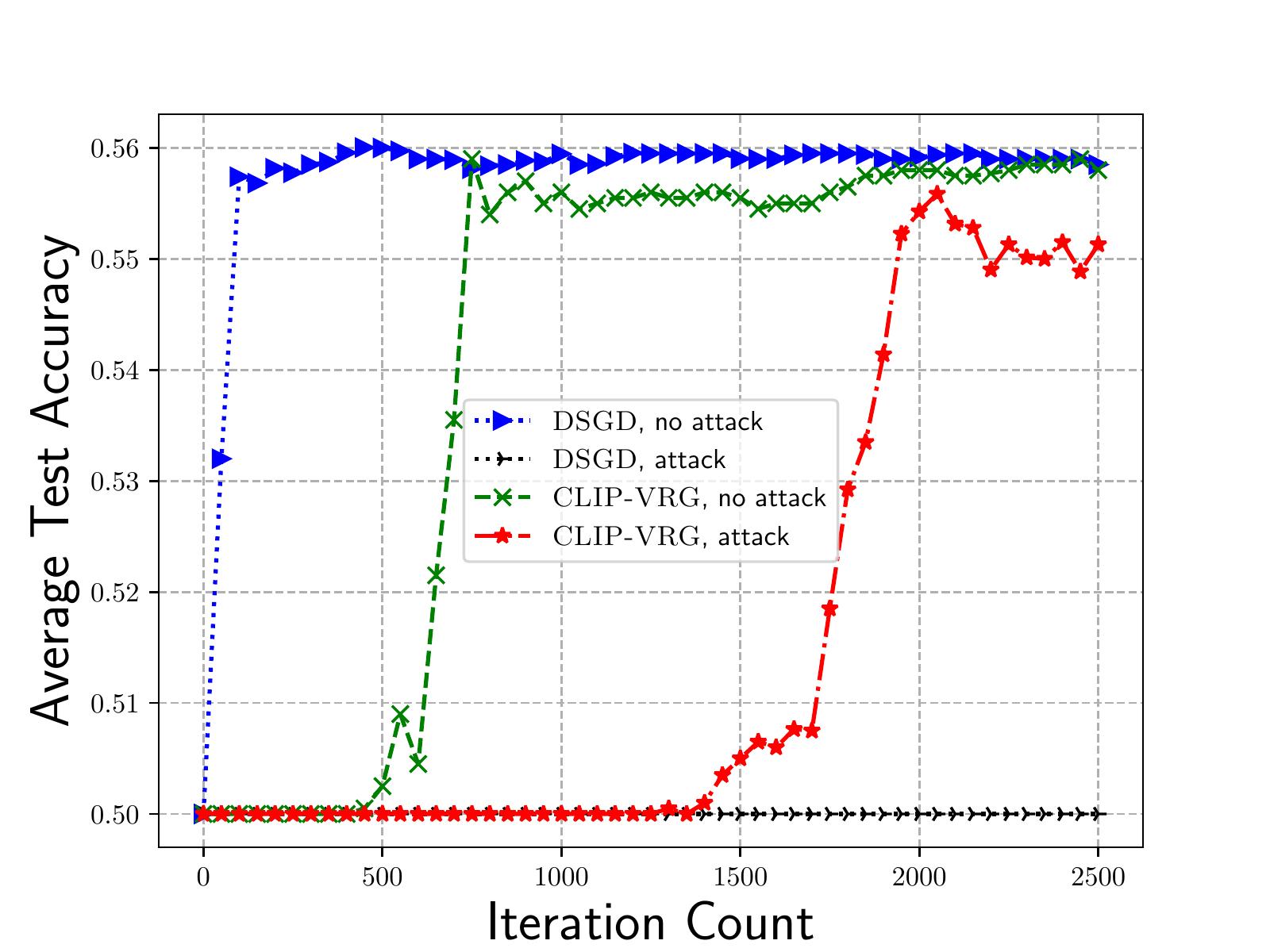}
\includegraphics[width=0.325\textwidth]{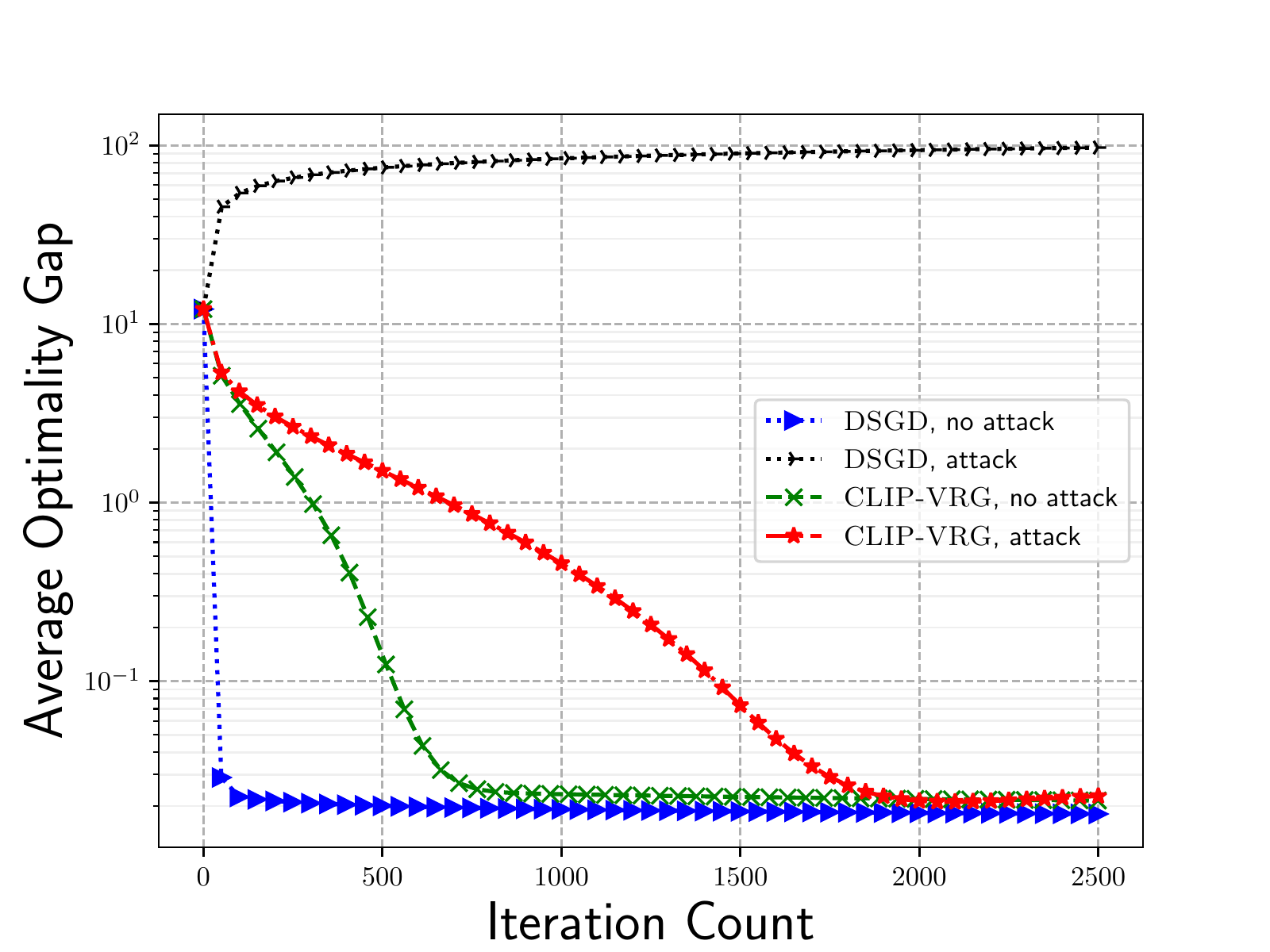}
\caption{An connected cycle of 15 agents. Performance comparison of DSGD and \cvrg\ in case of no gradient attack and persistent gradient attack on CIFAR-10 dataset.}
\label{fig:cycle_graph_cifar10}
\end{figure*}

        \section{Proof of Theorem \ref{thm:rdgd}}
\label{sec:proofs}
Define the network average of all local decision variables at iteration $t$ as $\ox^{t} := (1/n) \sum_{i=1}^n \x_i^t$. Then, by the double stochasticity of $\W$, averaging \eqref{eq:local-rdgd} over all $i \in [n]$ gives that
\begin{align}
\label{eq:gdmap-update}
     \ox^{t+1} = \ox^{t} - \frac{\alpha_t}{n}\sum_{i = 1}^n k_i^t \bv_i^t.
\end{align}
Define long vectors and diagonal matrix
\begin{align*}
    & \x^t = 
    \begin{bmatrix}
    \x_1^t\\
    \vdots\\
    \x_n^t
    \end{bmatrix}
    , 
    \bv^t = 
    \begin{bmatrix}
    \bv_1^t\\
    \vdots\\
    \bv_n^t
    \end{bmatrix}
    , \\
    & \K^t = 
    \text{diag}([k_1^t\one^\top_d, \ldots, k_n^t \one^\top_d]). 
\end{align*}
Then, all local updates at iteration $t$ can be summarized as
\begin{align}
\label{eq:alg-stack}
    \x^{t+1} = (\W \otimes \I_d)(\x^t - \alpha_t \K^t \bv^t).
\end{align}

We first develop the following lemma to estimate consensus error $\|\x^t - \one_n \otimes \ox^t\|$, i.e., the distance between local variables $\x_i^t$ and network average $\ox^t$.
\begin{lemma}
\label{lm:consensus}
Take integer $\varphi > 1/\big(1 - \beta^{1/(\tau_\alpha + \tau_\gamma)}\big) - 1$. Then, the iterate $\x^t$ generated by \cvrg\ satisfies that for any constant $c \ge \beta / \big[ \big(\varphi / (1 + \varphi)\big)^{\tau_\alpha + \tau_\gamma} - \beta \big]$, we have
\begin{align}
\label{eq:consensus-error-1}
    \|\x^t - \one_n \otimes \ox^t \| \le \sqrt{n}  \sum_{s=0}^{t-1} \beta^{t-s} \alpha_s \gamma_s \le c \sqrt{n}\alpha_t \gamma_t. 
\end{align}
\end{lemma}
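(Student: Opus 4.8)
The plan is to unroll the stacked recursion \eqref{eq:alg-stack} and project onto the ``disagreement'' subspace orthogonal to the consensus direction. Let $\J := (1/n)\one_n\one_n^\top$ and note that $(\J\otimes\I_d)\x^t = \one_n\otimes\ox^t$, so the consensus error is $\|(\bM\otimes\I_d)\x^t\|$ where $\bM := \I_n - \J$. The key algebraic fact is that, because $\W$ is doubly stochastic, $\bM\W = \W\bM = \W - \J$, and by Assumption \ref{as:mixmatrix} the spectral norm of $\W - \J$ on the range of $\bM$ is exactly $\beta$. Applying $(\bM\otimes\I_d)$ to \eqref{eq:alg-stack} and using $\x_i^0 = \x_j^0$ (so $(\bM\otimes\I_d)\x^0 = \zo$) gives, after iterating,
\begin{align*}
    (\bM\otimes\I_d)\x^{t} = -\sum_{s=0}^{t-1} \big((\W-\J)^{t-s}\otimes\I_d\big)\,\alpha_s \K^s \bv^s.
\end{align*}
Taking norms and using $\|(\W-\J)^{t-s}\otimes\I_d\|_2 = \beta^{t-s}$ together with the pointwise clipping bound $\|k_i^s\bv_i^s\|\le\gamma_s$ for every $i$ — hence $\|\K^s\bv^s\|\le\sqrt{n}\,\gamma_s$ — yields the first inequality in \eqref{eq:consensus-error-1}. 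This is the routine part.

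The main work is the second inequality: showing $\sum_{s=0}^{t-1}\beta^{t-s}\alpha_s\gamma_s \le c\,\alpha_t\gamma_t$ for the stated $c$. Since $\alpha_s\gamma_s = c_\alpha c_\gamma (s+\varphi)^{-(\tau_\alpha+\tau_\gamma)}$ is positive and decreasing in $s$, I would compare consecutive terms: for $s \le t-1$,
\begin{align*}
    \frac{\alpha_{s}\gamma_{s}}{\alpha_{t}\gamma_{t}} = \left(\frac{t+\varphi}{s+\varphi}\right)^{\tau_\alpha+\tau_\gamma} \le \left(\frac{s+1+\varphi}{s+\varphi}\right)^{(t-s)(\tau_\alpha+\tau_\gamma)},
\end{align*}
because $(t+\varphi)/(s+\varphi) = \prod_{r=s}^{t-1}(r+1+\varphi)/(r+\varphi) \le ((s+1+\varphi)/(s+\varphi))^{t-s}$ as the ratios $(r+1+\varphi)/(r+\varphi)$ decrease in $r$. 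The worst case over $s\ge 0$ is $s=0$, giving the uniform bound $\alpha_s\gamma_s/\alpha_t\gamma_t \le \big((1+\varphi)/\varphi\big)^{(t-s)(\tau_\alpha+\tau_\gamma)} = \theta^{-(t-s)}$ with $\theta := (\varphi/(1+\varphi))^{\tau_\alpha+\tau_\gamma}$. The choice $\varphi > 1/(1-\beta^{1/(\tau_\alpha+\tau_\gamma)})-1$ is exactly what makes $\theta > \beta$, so $\beta^{t-s}\alpha_s\gamma_s \le (\beta/\theta)^{t-s}\alpha_t\gamma_t$ with $\beta/\theta<1$, and summing the geometric series $\sum_{s=0}^{t-1}(\beta/\theta)^{t-s}\le \sum_{m\ge 1}(\beta/\theta)^m = \beta/(\theta-\beta)$ gives precisely the constant $c \ge \beta/\big[(\varphi/(1+\varphi))^{\tau_\alpha+\tau_\gamma}-\beta\big]$.

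The only subtlety I anticipate is making the telescoping ratio bound $(t+\varphi)/(s+\varphi)\le((s+1+\varphi)/(s+\varphi))^{t-s}$ fully rigorous (monotonicity of $r\mapsto(r+1+\varphi)/(r+\varphi)$ and the fact that the $s=0$ term dominates), and checking that $\varphi>1$ together with the lower bound on $\varphi$ indeed forces $\theta\in(\beta,1)$; everything else is bookkeeping. I would present the two inequalities as two short displays, state the telescoping estimate as an intermediate claim, and conclude.
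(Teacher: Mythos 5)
Your argument is correct. The first inequality is obtained exactly as in the paper: unroll \eqref{eq:alg-stack}, kill the initial term using $\x_i^0=\x_j^0$ and double stochasticity, bound the disagreement projection of $\W^{t-s}$ by $\beta^{t-s}$, and use the clipping bound $\|k_i^s\bv_i^s\|\le\gamma_s$ stacked into $\sqrt{n}\gamma_s$. Where you genuinely diverge is the second inequality: the paper proves $\sum_{s=0}^{t-1}\beta^{t-s}\alpha_s\gamma_s\le c\,\alpha_t\gamma_t$ by induction on $t$, reducing the inductive step to $c(\alpha_{t+1}\gamma_{t+1}-\beta\alpha_t\gamma_t)\ge\beta\alpha_t\gamma_t$ and using the condition on $\varphi$ to ensure $\alpha_{t+1}\gamma_{t+1}>\beta\alpha_t\gamma_t$; you instead bound each term directly via the telescoping ratio estimate $\big((t+\varphi)/(s+\varphi)\big)^{\tau_\alpha+\tau_\gamma}\le\theta^{-(t-s)}$ with $\theta=(\varphi/(1+\varphi))^{\tau_\alpha+\tau_\gamma}$ and sum a geometric series. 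Both routes use the hypothesis on $\varphi$ in the same way (it is precisely what makes $\theta>\beta$) and land on the identical constant $\beta/(\theta-\beta)$; your version has the small advantage of dispensing with the separate base case $c\ge\beta(1+1/\varphi)^{\tau_\alpha+\tau_\gamma}$ that the induction requires (and which is anyway implied by $c\ge\beta/(\theta-\beta)$), while the paper's induction avoids the telescoping ratio lemma. The steps you flag as needing care — monotonicity of $r\mapsto 1+1/(r+\varphi)$ and the dominance of the $s=0$ factor — do go through, since the base exceeds $1$ and the exponent is positive, so there is no gap.
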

\begin{proof} 
By the definition of $k_i^t$, we have
\begin{align}
\label{eq:d-bound}
    \forall i \in [n], \ \norm{ k_i^t \bv_i^t} \le \gamma_t. 
\end{align}
From~\eqref{eq:alg-stack} we have
\begin{align}
\label{eq:xt}
    \x^t = (\W \otimes \I_d)^t \x^0 - \sum_{s = 0}^{t-1}  (\W \otimes \I_d)^{t-s} \alpha_s \K^s \bv^s.
\end{align}
Then, 
\begin{align*}
\begin{split}
& \|\x^t - \mathbf{1}_n \otimes \ox^t\| \\
= \ & \norm{ \big( \I_{nd} - \frac{1}{n}\mathbf{1}_n \mathbf{1}_n^{\top} \otimes \I_d \big)\x^t} \\
= \ & \norm{ \big(\frac{1}{n} \mathbf{1}_n \mathbf{1}_n^\top \otimes \I_d - \I_{nd} \big)\sum_{s = 0}^{t - 1}(\W \otimes \I_d)^{t-s} \alpha_s \K^s \bv^s} \\
\le \ &  \sum_{s=0}^{t-1} \norm{  \frac{1}{n}\mathbf{1}_n \mathbf{1}_n^\top - \W^{t-s} }_2 \norm{\alpha_s \K^s \bv^s} \\
\overset{\eqref{eq:d-bound}}{\le} \  & \sqrt{n}  \sum_{s=0}^{t-1} \beta^{t-s} \alpha_s \gamma_s.
\end{split}
\end{align*}
In the second equality above, we exploited the fact that
\begin{align}
\label{eq:initial}
    \Big( \I_{nd} - \frac{1}{n}\mathbf{1}_n \mathbf{1}_n^{\top} \otimes \I_d \Big) (\W \otimes \I_d)^{t} \x^0 = 0,
\end{align}
owing to the initialization $\x_i^0 = \x_j^0$ for all $i, j \in [n]$ and the assumption that $\W$ is doubly stochastic. In the first inequality above, we used the Assumption \ref{as:mixmatrix} that $\W$ is real symmetric so $\W^{t-s}$ has eigenvalues $(\lambda_i(\W))^{t-s}$ for $i = 1, \ldots, n$, and $\W$ is stochastic thus the eigenvalue of $\W$ associated with 1 is $(1/\sqrt{n})\one_n$. We next show that for some $c := c(\beta,  \tau_\alpha, \tau_\gamma, \varphi)$, we have 
\begin{align}
\label{eq:sum-bd}
    \sum_{s = 0}^{t - 1} \beta^{t - s} \alpha_s \gamma_s \le c \alpha_t \gamma_t.  
\end{align}
For $t = 1$, it suffices to have 
\begin{align}
\label{eq:cns-base}
    \beta \alpha_0 \gamma_0 \le c \alpha_1 \gamma_1,
\end{align}
which is equivalent to
\begin{align}
\label{eq:ccond1}
    c \ge \beta (1 + \frac{1}{\varphi})^{\tau_\alpha + \tau_\gamma}. 
\end{align}
Suppose \eqref{eq:sum-bd} holds true for some $t \ge 1$, to ensure \eqref{eq:sum-bd} for $t + 1$, it suffices to have 
\begin{align*}
     \sum_{s = 0}^t \beta^{t + 1 - s} \alpha_s \gamma_s = \beta \sum_{s = 0}^{t - 1} \beta^{t - s} \alpha_s \gamma_s + \beta \alpha_t \gamma_t \le c \alpha_{t+1} \gamma_{t+1}. 
\end{align*}
Substituting \eqref{eq:sum-bd} for $t$ to the above relation, it suffices to have 
\begin{align}
\label{eq:induc_reduced}
    c(\alpha_{t+1}\gamma_{t+1} - \beta\alpha_{t}\gamma_t) \ge \beta \alpha_t \gamma_t. 
\end{align}
We take positive $\varphi$ such that $\alpha_{t+1}\gamma_{t+1} > \beta\alpha_{t}\gamma_t$ for all $t \ge 0$, which is equivalent to  
\begin{align}
\label{eq:phicond}
    \varphi > \frac{1}{1 - \beta^{1/(\tau_\alpha + \tau_\gamma)}} - 1.  
\end{align}
Then, \eqref{eq:induc_reduced} boils down to 
\begin{align}
\label{eq:ccond2}
    c \ge \frac{\beta}{(\frac{\varphi}{1 + \varphi})^{\tau_\alpha + \tau_\gamma} - \beta}.
\end{align}
Taking positive $c, \varphi$ that satisfy \eqref{eq:ccond1}, \eqref{eq:phicond}-\eqref{eq:ccond2}, and combing with the base case \eqref{eq:cns-base} finish the proof of this lemma. 
\end{proof}

% \begin{lemma}
% \label{lm:consensus-error2}
% The iterates $\x^t, \x^{t+\frac{1}{2}}$ generated by Algorithm \ref{al:rdgd} satisfy that
% \begin{align}
% \label{eq:consensus-error-2}
%     \|\x^{t+\frac{1}{2}} - \one_n \otimes \ox^{t+1} \| \le \sqrt{n}   \sum_{s=0}^{t} \beta^{t-s} \alpha_s \gamma_s. 
% \end{align} 
% \end{lemma}
% \begin{proof}
% From~\eqref{eq:ydt} and~\eqref{eq:xt}, we have
% \begin{align*}
%     \x^{t+\frac{1}{2}} = (\W \otimes \I_d)^{t} \x^0 - \sum_{s = 0}^{t }(\W^{t - s} \otimes \I_d) \alpha_s \bd^s.
% \end{align*}
% With the double stochasticity of $\W$ and \eqref{eq:initial}, we have
% \begin{align*}
% \begin{split}
%     & \norm{\x^{t+\frac{1}{2}} - \one_n \otimes \ox^{t+1} }  \\
%     & = \norm{\x^{t+\frac{1}{2}} - \one_n \otimes \ox^{t + \frac{1}{2}} } \\
%     & = \norm{(\I_n - \frac{1}{n}\one_n \one_n^\top \otimes \I_d) \x^{t+\frac{1}{2}}} \\
%     & = \norm{\Big(\frac{1}{n} \mathbf{1}_n \mathbf{1}_n^\top \otimes \I_d - \I_{nd}\Big)\sum_{s = 0}^{t}(\W^{t - s} \otimes \I_d) \alpha_s \bd^s} \\
%     & \le  \sum_{s=0}^{t} \norm{  \frac{1}{n}\mathbf{1}_n \mathbf{1}_n^\top - \W^{t-s} } \norm{\alpha_s \mathbf{d}^s} \\
%     & \overset{\eqref{eq:d-bound}}{\le}\sqrt{n}   \sum_{s=0}^{t} \beta^{t-s} \alpha_s \gamma_s. 
% \end{split}
% \end{align*}
% \end{proof}

We next try to upper bound the gradient estimation errors on regular agents. As an intermediate result, we first show the following lemma. 
\begin{lemma}
\label{lm:mse2as}
Let $\{z_t\}$ be an $\R_{+}$ stochastic sequence. Let $\G_{t +1}$ be the $\sigma$-algebra generated from $\{z_t\}_{t = 1}^k$. Suppose that for some positive constants $c_1, c_2, 0 < a < 1$ and $a < b < a + 1$, $\{z_t\}$ satisfies that
\begin{align}
\label{eq:msecont}
    \E\big(z_{t+1}\mid \G_{t+1}\big) \le (1 - c_1(t+1)^{-a}) z_t + c_2(t+1)^{-b}.
\end{align}
Then, we have that for any $0 < \epsilon_0 < b - a$,
\begin{align*}
    \P\big(\lim_{t \rightarrow \infty} (t+1)^{b - a - \epsilon_0} z_t  = 0 \big) = 1. 
\end{align*}
\end{lemma}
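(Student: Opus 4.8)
The plan is to first establish the claimed rate for $\E(z_t)$ and then lift it to the almost-sure statement. For the deterministic rate, take total expectations in \eqref{eq:msecont}: with $u_t := \E(z_t)$ the tower property yields $u_{t+1} \le \big(1-c_1(t+1)^{-a}\big)u_t + c_2(t+1)^{-b}$. I would then run a standard Chung-type induction to obtain $u_t = \O((t+1)^{-(b-a)})$, the inductive step being the verification that $\psi_t := C(t+1)^{-(b-a)}$ is a super-solution of this recursion once $C$ is large enough; the hypotheses $0<a<1$ and $a<b$ enter precisely to make the $\Theta((t+1)^{-1})$ discrepancy between $\psi_{t+1}$ and $\psi_t$ asymptotically negligible against $c_1(t+1)^{-a}$ and to keep the exponent $b-a$ positive.

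For the almost-sure upgrade, fix $\epsilon_0 \in (0,b-a)$, put $p := b-a-\epsilon_0 > 0$, and study $y_t := (t+1)^p z_t$. By the first step $\E(y_t) = \O((t+1)^{-\epsilon_0}) \to 0$, and multiplying \eqref{eq:msecont} by $(t+2)^p$ and folding the $\Theta((t+1)^{-1})$ error into the contraction (again using $a<1$) gives, for $t$ large,
\[
\E\big(y_{t+1}\mid\G_{t+1}\big) \le \big(1 - \tfrac{c_1}{2}(t+1)^{-a}\big) y_t + c_3 (t+1)^{-(a+\epsilon_0)}.
\]
Since the forcing term $(t+1)^{-(a+\epsilon_0)}$ need not be summable, the Robbins--Siegmund almost-supermartingale theorem is not directly applicable; instead I would peel off a deterministic comparison sequence $\phi_t > 0$ with $\phi_t = \Theta((t+1)^{-(b-a)})$ chosen to be a super-solution of the \emph{original} recursion (forcing term included), so that $w_t := z_t - \phi_t$ satisfies the homogeneous inequality $\E(w_{t+1}\mid\G_{t+1}) \le (1-c_1(t+1)^{-a})w_t$ and is bounded below by the vanishing sequence $-\phi_t$. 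One then shows that $(t+1)^{b-a}w_t$ is bounded below by a constant and, using the homogeneous recursion, is dominated by a convergent nonnegative supermartingale; together with $(t+1)^{b-a-\epsilon_0}\phi_t \to 0$ this gives $(t+1)^{b-a-\epsilon_0}z_t \to 0$ almost surely. A fallback, should this supermartingale construction prove fragile, is a Borel--Cantelli argument: apply the $L^1$ bound $\E(y_t) = \O((t+1)^{-\epsilon_0})$ along a sufficiently sparse deterministic subsequence $\{t_k\}$ and control $\max_{t_k \le t < t_{k+1}}(t+1)^p z_t$ within each block via Doob's maximal inequality for the nonnegative supermartingale $t \mapsto z_t + \sum_{s=t}^{t_{k+1}-1} c_2(s+1)^{-b}$.

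\emph{Main obstacle.} The difficulty is entirely in the second step. Normalizing by the target rate turns the forcing into a non-summable term, so it cannot simply be convergence-tested away, and one must either reconcile the sign-indefiniteness of $w_t$ with the supermartingale argument so that the rescaled process is genuinely a nonnegative supermartingale, or calibrate the subsequence $\{t_k\}$ so that the contraction accumulated across each block is large while the per-block deviation probabilities coming from the $L^1$ rate remain summable. Making these two requirements compatible --- which is where the constraint $b<a+1$ and the slack $\epsilon_0$ are used --- is the technical heart of the lemma.
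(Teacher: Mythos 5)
Your plan follows the same architecture as the paper's proof: first the $L^1$ rate for $\E(z_t)$ (the paper invokes a ready-made Chung-type result, Lemma~\ref{lm:cvg_diff_main}, in place of your induction), then multiplication of \eqref{eq:msecont} by $(t+2)^{b-a-\epsilon_0}$ using concavity of $(t+1)^{b-a-\epsilon_0}$ (this is exactly where $b<a+1$ enters), arriving at the same conditional recursion for $y_t=(t+1)^{b-a-\epsilon_0}z_t$ that you write down, and finally a supermartingale argument. But the step you yourself label ``the technical heart'' is precisely the step you do not carry out, and neither of your two routes closes it as stated. The paper's device is to subtract from the \emph{rescaled} process the exact discrete particular solution
\[
S_t=\sum_{i=0}^{t-1}\Bigl(\prod_{j=i+1}^{t-1}\bigl(1-c_3(j+1)^{-a}\bigr)\Bigr)\frac{c_4}{(i+1)^{a+\epsilon_0}},
\]
so that $V(t)=y_t-S_t$ satisfies the homogeneous inequality $\E(V(t+1)\mid\cdot)\le\bigl(1-c_3(t+1)^{-a}\bigr)V(t)$ with no leftover forcing of any order; a separate lemma (Lemma~\ref{lm:sum_zero_limit}, which is where the slack $\epsilon_0>0$ is spent) gives $S_t\to0$, so $V(t)$ is bounded below, supermartingale convergence yields $y_t\to V_*$ a.s., and Fatou's lemma together with $\E(y_t)\to0$ forces $V_*=0$. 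Your plan contains neither the closed-form correction $S_t$ nor the Fatou endgame.

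Concretely, here is why your two routes stall. For the first: after subtracting a deterministic super-solution $\phi_t$ you do get the homogeneous inequality for $w_t=z_t-\phi_t$, but $w_t$ is sign-indefinite, and when $w_t<0$ the bound $\E\bigl((t+2)^p w_{t+1}\mid\G_{t+1}\bigr)\le (t+2)^p\bigl(1-c_1(t+1)^{-a}\bigr)w_t$ does \emph{not} imply $\le (t+1)^p w_t$, since multiplying a negative quantity by the smaller positive factor increases it. The resulting excess is of order $(t+1)^{p-a}\,\phi_t=\Theta\bigl((t+1)^{-(a+\epsilon_0)}\bigr)$ --- the same non-summable forcing you were trying to eliminate --- so the recentering buys nothing and the claimed domination by a convergent nonnegative supermartingale is unsubstantiated. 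Moreover, your identification of the limit relies on an almost-sure bound at the \emph{critical} rate $b-a$ (so that the factor $(t+1)^{-\epsilon_0}$ kills it), which is strictly stronger than the lemma's conclusion and is nowhere established; the paper instead identifies the limit at the rate $b-a-\epsilon_0$ via Fatou and the $L^1$ result. For the fallback: the blockwise supermartingale $z_t+\sum_{s=t}^{t_{k+1}-1}c_2(s+1)^{-b}$ discards the contraction entirely, and with dyadic blocks $t_k=2^k$ the Doob bound gives block probabilities of order $t_{k+1}^{\,b-a-\epsilon_0}\,t_k^{\,1-b}=\Theta\bigl(2^{k(1-a-\epsilon_0)}\bigr)$, summable only if $a+\epsilon_0>1$, which fails in general. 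So the proposal correctly locates the obstacle but does not overcome it; the missing idea is the exact particular-solution subtraction together with the auxiliary fact $S_t\to0$ and the Fatou argument.
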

\begin{proof}
The proof is adapted from the proof of Lemma 1 in \cite{yu2021dynamic}. Applying Lemma \ref{lm:cvg_diff_main} in the Appendix leads to that $\forall 0 < \epsilon_0 < b - a$, 
\begin{align}
\label{eq:msecvgzt}
    \lim_{t \rightarrow \infty} (t+1)^{b - a - \epsilon_0}\E\big(z_t\big) = 0.
\end{align}
Now, we fix $\epsilon_0$. Since $0 < b - a - \epsilon_0 < 1$,  $(t+1)^{b - a - \epsilon_0}$ is a concave function of $t$, and thus
\begin{align*}
    (t+2)^{b - a - \epsilon_0} \le (t+1)^{b - a - \epsilon_0}[1 + (b - a - \epsilon_0)(t+1)^{-1}]. 
\end{align*}
Multiplying the both sides of the above relation into the both sides of \eqref{eq:msecont} we obtain that for sufficiently large $t$, there exist some constants $c_3, c_4$ such that
\begin{align}
\begin{aligned}
\label{eq:asytp2}
    & (t+2)^{b - a - \epsilon_0} \E\big(z_{t+1}\mid \G_{t+1}\big) \\
    \le \ & \Big[1 - \frac{c_1}{(t+1)^a} + \frac{b - a - \epsilon_0}{t + 1} - \frac{c_1(b - a - \epsilon_0)}{(t+1)^{a + 1}} \Big]\\
    & \cdot (t+1)^{b - a - \epsilon_0} z_t + \frac{c_2}{(t+1)^{a + \epsilon_0}}\Big(1 + \frac{b - a - \epsilon_0}{t + 1}\Big) \\
    \le \ & \big(1 - \frac{c_3}{(t+1)^{a}}\big)(t+1)^{b - a - \epsilon_0} z_t + \frac{c_4}{(t+1)^{a + \epsilon_0}}.
\end{aligned}
\end{align}
Define the process
\begin{align*}
    V(t) & =  (t+1)^{b - a - \epsilon_0} z_t \\
   & \quad - \sum_{i=0}^{t-1}\Big[ \Big( \Pi_{j = i + 1}^{t - 1}(1 - \frac{c_3}{(j + 1)^{a}  }  \Big) \frac{c_4}{(i + 1)^{a + \epsilon_0}} \Big].
\end{align*}
Using Lemma \ref{lm:sum_zero_limit} in the Appendix we obtain that 
\begin{align}
\label{eq:vttailbd}
    \lim_{t \rightarrow \infty} \sum_{i = 0}^{t - 1}\Big[ \Big( \Pi_{j = i+1}^{t - 1} (1 - \frac{c_3}{(j + 1)^a} \Big) \frac{c_4}{(i+1)^{a + \epsilon_0}} \Big] = 0,
\end{align}
where we used the convention that $\Pi_{j = i+1}^{t-1} (1 - c_3(j+1)^{-a}) = 1$ for $j = i -1$. Also note that we can split
\begin{align*}
    & \sum_{i = 0}^t\Big[\Big( \Pi_{j = i + 1}^t (1 - \frac{c_3}{(j + 1)^a}) \Big) \frac{c_4}{(i + 1)^{a + \epsilon_0}} \Big] \\
    = \ & \Big[ 1 - \frac{c_3}{(t + 1)^a} \Big]\sum_{i = 0}^{t - 1}\Big[ \Big( \Pi_{j = i + 1}^{t - 1} ( 1- \frac{c_3}{(j + 1)^a}) \Big) \frac{c_4}{(i + 1)^{a + \epsilon_0}} \Big] \\
    & + \frac{c_4}{(t+1)^{a + \epsilon_0}}.
\end{align*}
Denote $\H_{k+1}$ the natural filtration of the process $\{(t+1)^{b - a - \epsilon_0}\}$, and note that $V(t)$ is adapted to this filtration. Then, by the independence condition, 
\begin{align*}
    & \E(V(t+1) \mid \H_{t+1}) \\
    = \ & \E\Big((t+2)^{b - a - \epsilon_0} z_{t+1}\mid \H_{t+1} \Big) \\
    & - \sum_{i=0}^{t}\Big[ \Big( \Pi_{j = i + 1}^{t}(1 - \frac{c_3}{(j + 1)^{a}  }  \Big) \frac{c_4}{(i + 1)^{a + \epsilon_0}} \Big] \\
    \underset{\eqref{eq:asytp2}}{\le} \ & \Big[1 - \frac{c_3}{(t+1)^a}\Big](t+1)^{b - a - \epsilon_0} z_t + \frac{c_4}{(t+1)^{a + \epsilon_0}}  \\
    & -\sum_{i=0}^{t}\Big[ \Big( \Pi_{j = i + 1}^{t}(1 - \frac{c_3}{(j + 1)^{a}  }  \Big) \frac{c_4}{(i + 1)^{a + \epsilon_0}} \Big] \\
    = \ & \Big[1 - \frac{c_3}{(t+1)^{a}}\Big] V(t) \\
    \le \ & V(t).
\end{align*}
Thus, $\{V(t)\}$ is a supermartingale. By \eqref{eq:vttailbd}, $V(t)$ is bounded from below. It follows that there exists a finite random variable $V_*$ such that $\P(\lim_{t \rightarrow \infty} V(t) = V_*) = 1$. Thus, with \eqref{eq:vttailbd} we have
\begin{align*}
    \P\big(\lim_{t \rightarrow \infty} (t+1)^{b - a - \epsilon_0} z_t = V_*\big) = 1.
\end{align*}
Then, by Fatou's lemma and \eqref{eq:msecvgzt}, we have 
\begin{align*}
    0 & \le \E\big( \lim_{t \rightarrow \infty} (t+1)^{b - a - \epsilon_0} z_t \big) \\
    & \le \liminf_{t \rightarrow \infty} (t+1)^{b - a - \epsilon_0} \E(z_t) = 0.
\end{align*}
Therefore, we have $\P(\lim_{t \rightarrow \infty} (t+1)^{b - a - \epsilon_0} z_t = 0) = 1$.
\end{proof}

\begin{lemma}
\label{lm:gdest-err}
Take $\tau_\eta = 2(\tau_\alpha + \tau_\gamma)/3 $ and $0< 2\tau_\gamma < \tau_\alpha < 1$. For any $i \in  \N$, for any $0 < \epsilon < \tau_\eta/2$, almost surely, there exists some constant $c_p$ such that $\| \bv_i^t - \nabla f_i(\x_i^t)\| \le c_p(t+1)^{-(0.5\tau_\eta  - \epsilon)}$.

% \begin{align*}
%     \P \Big(\lim_{t \rightarrow \infty} (t+1)^{\tau_\eta - \epsilon} \norm{\bv_i^t - \nabla f_i(\x_i^t) }^2 = 0 \Big) = 1. 
% \end{align*}
\end{lemma}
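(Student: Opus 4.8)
The plan is to control the local gradient-estimation error $\e_i^t := \bv_i^t - \nabla f_i(\x_i^t)$ on regular agents $i\in\N$ by deriving a one-step mean-square recursion for it and then invoking Lemma~\ref{lm:mse2as}. Plugging $\m_i^{t+1}=\nabla f_i(\x_i^{t+1})+\bxi_i^{t+1}$ into \eqref{eq:eta_vr} gives
\begin{align*}
  \e_i^{t+1}=(1-\eta_t)\big(\e_i^t-\bd_i^t\big)+\eta_t\bxi_i^{t+1},\qquad \bd_i^t:=\nabla f_i(\x_i^{t+1})-\nabla f_i(\x_i^t),
\end{align*}
so the error contracts by a factor $(1-\eta_t)$ each step, perturbed by a drift $(1-\eta_t)\bd_i^t$ (caused by the movement of the iterate between consecutive queries) and by a fresh martingale-difference noise $\eta_t\bxi_i^{t+1}$.

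First I would bound the drift. By Assumption~\ref{as:smooth}, $\|\bd_i^t\|\le L\|\x_i^{t+1}-\x_i^t\|$, and writing $\x_i^{t+1}-\x_i^t=\big(\sum_j w_{ij}\x_j^t-\x_i^t\big)-\alpha_t\sum_j w_{ij}k_j^t\bv_j^t$ and using $\sum_j w_{ij}=1$, the first bracket equals $\sum_j w_{ij}(\x_j^t-\ox^t)-(\x_i^t-\ox^t)$, hence has norm at most $2\|\x^t-\one_n\otimes\ox^t\|\le 2c\sqrt n\,\alpha_t\gamma_t$ by Lemma~\ref{lm:consensus}, while the second term has norm at most $\alpha_t\gamma_t$ since $\|k_j^t\bv_j^t\|\le\gamma_t$ (this is where clipping enters). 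Therefore $\|\bd_i^t\|\le L(2c\sqrt n+1)\alpha_t\gamma_t=\O\big((t+\varphi)^{-(\tau_\alpha+\tau_\gamma)}\big)$.

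Next I would condition on the natural filtration $\mathcal{F}_t$ generated by everything up to and including the queries at step $t$, under which $\e_i^t,\bd_i^t,\eta_t$ are measurable while $\bxi_i^{t+1}$ is independent of $\mathcal{F}_t$, mean zero, with $\E\|\bxi_i^{t+1}\|^2\le\sigma^2$ (Assumption~\ref{as:noise}). Expanding $\|\e_i^{t+1}\|^2$, the cross term drops in conditional expectation, and Young's inequality $\|\e_i^t-\bd_i^t\|^2\le(1+\eta_t)\|\e_i^t\|^2+(1+\eta_t^{-1})\|\bd_i^t\|^2$ together with $(1-\eta_t)^2(1+\eta_t)\le1-\eta_t$ and $(1-\eta_t)^2(1+\eta_t^{-1})\le2\eta_t^{-1}$ give
\begin{align*}
  \E\big(\|\e_i^{t+1}\|^2\mid\mathcal{F}_t\big)\le(1-\eta_t)\|\e_i^t\|^2+\tfrac{2}{\eta_t}\|\bd_i^t\|^2+\eta_t^2\sigma^2.
\end{align*}
With $\eta_t$ of order $(t+\varphi)^{-\tau_\eta}$, the drift term is of order $(t+\varphi)^{\tau_\eta-2(\tau_\alpha+\tau_\gamma)}$ and the noise term of order $(t+\varphi)^{-2\tau_\eta}$; the choice $\tau_\eta=2(\tau_\alpha+\tau_\gamma)/3$ is precisely what forces $\tau_\eta-2(\tau_\alpha+\tau_\gamma)=-2\tau_\eta$, so both additive terms are $\O\big((t+1)^{-2\tau_\eta}\big)$. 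Hence, for $t$ large enough, $z_t:=\|\e_i^t\|^2$ satisfies \eqref{eq:msecont} with $a=\tau_\eta$ and $b=2\tau_\eta$; here $0<\tau_\eta<1$ follows from $2\tau_\gamma<\tau_\alpha<1$, and $a<b<a+1$ holds since $b=2a<1+a$. Lemma~\ref{lm:mse2as} then gives, for every $0<\epsilon_0<\tau_\eta$, $\P\big(\lim_{t}(t+1)^{\tau_\eta-\epsilon_0}\|\e_i^t\|^2=0\big)=1$; since a convergent sequence is bounded, almost surely $\|\e_i^t\|\le c_p(t+1)^{-(\tau_\eta-\epsilon_0)/2}$ for a finite random $c_p$, and setting $\epsilon_0=2\epsilon$ yields the stated bound for any $0<\epsilon<\tau_\eta/2$.

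The hard part will be the measure-theoretic bookkeeping: setting up the filtration so that $\bxi_i^{t+1}$ is genuinely independent of $\x_i^{t+1}$, of $\bd_i^t$, and of whatever adversarial messages the agents in $\A$ have produced through step $t$, so that the cross term vanishes exactly; this is coupled with the displacement estimate $\|\x_i^{t+1}-\x_i^t\|=\O(\alpha_t\gamma_t)$, whose proof leans on both the clipping constraint $\|k_j^t\bv_j^t\|\le\gamma_t$ and the consensus bound of Lemma~\ref{lm:consensus}. Everything else is a routine matching of exponents, calibrated exactly by $\tau_\eta=2(\tau_\alpha+\tau_\gamma)/3$.
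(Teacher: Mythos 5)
Your proposal is correct and follows essentially the same route as the paper: the same error recursion from \eqref{eq:eta_vr}, the same $\O(\alpha_t\gamma_t)$ bound on $\|\x_i^{t+1}-\x_i^t\|$ via the clipping bound $\|k_j^t\bv_j^t\|\le\gamma_t$ and Lemma~\ref{lm:consensus}, the same conditional mean-square contraction with Young's inequality killing the cross term, and the same invocation of Lemma~\ref{lm:mse2as} with $a=\tau_\eta$, $b=2\tau_\eta$ after observing that $\tau_\eta=2(\tau_\alpha+\tau_\gamma)/3$ balances the drift and noise exponents. The only differences are cosmetic (triangle inequality on norms versus Jensen on squared norms for the displacement bound, and the indexing of the filtration).
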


\begin{proof}
First, we bound the one step difference $\|\x_i^{t+1} - \x_i^t\|^2$. By Jensen's inequality, 
\begin{align*}
&  \norm{\x_i^{t+1} - \x_i^t}^2 \\
= \ & \norm{\sum_{j=1}^n w_{ij}(\x_j^t - \alpha_t k_j^t \bv_j^t)  - \x_i^t}^2 \\
\le \  &  2 \sum_{j = 1}^n w_{ij} \big( \norm{\x_j^t - \x_i^t}^2 + \alpha_t^2 \norm{k_j^t \bv_j^t}^2 \big). 
\end{align*}
By Lemma \ref{lm:consensus}, for any $i, j \in [n]$ and $i \ne j$, 
\begin{align*}
    \norm{\x_j^t - \x_i^t}^2 \le 2 \norm{\x_j^t - \ox^t}^2 + 2 \norm{\x_i^t - \ox^t}^2 \le 2 n c^2 \alpha_t^2 \gamma_t^2.
\end{align*}
Together with \eqref{eq:d-bound}, we obtain 
\begin{align*}
    \norm{\x_i^{t+1} - \x_i^t}^2 \le (4 n c^2 +2) \alpha_t^2 \gamma_t^2.   
\end{align*}
Next, we establish the recursion for gradient estimation errors. From \eqref{eq:eta_vr}, we have
\begin{align}
\begin{aligned}
\label{eq:pt-decom}
    & \bv_i^{t+1} - \nabla f_i(\x_i^{t+1}) \\
     = \ & (1 - \eta_t)(\bv_i^t - \nabla f_i(\x_i^t)) \\
     & +  (1 - \eta_t)(\nabla f_i(\x_i^t) - \nabla f_i(\x_i^{t+1}))
     + \eta_t \bxi_i^{t+1}. 
\end{aligned}
\end{align}
Let $\F_{t+1}$ be the  $\sigma$-algebra generated by $ \{ \x_i^0, \{\bxi_i^s\}_{i \in [n], 0 \le s \le t}\}$. Define $\p_i^t := \bv_i^t - \nabla f_i(\x_i^{t})$. By \eqref{eq:pt-decom} and the assumptions on the gradient noises on good agents, we have, for $t \ge 1$,
\begin{align*}
    & \E(\| \p_i^{t+1} \|^2 \mid \F_{t+1}) \\
    \le \ &  
     (1 - \eta_t)^2 \norm{\p_i^t}^2 + \eta_t^2 \E( \norm{\bxi_i^{t+1}}^2 \mid \F_{t+1})  \\
    &  + (1 - \eta_t)^2 \E(\norm{\nabla f_i(\x_i^t) - \nabla f_i(\x_i^{t+1})}^2\mid \F_{t+1})  \\
    &  + 2(1 - \eta_t)^2 \inpd{\p_i^t, \nabla f_i(\x_i^t) - \nabla f_i(\x_i^{t+1})} \\
    \le \ &  (1 - \eta_t)^2 \norm{\p_i^t}^2 + \eta_t^2 \sigma^2 \\
    & + (1 - \eta_t)^2 L^2 (4nc^2 + 2)\alpha_t^2 \gamma_t^2  + (1 - \eta_t)^2 \\
    & \cdot \Big [\frac{\eta_t}{2} \norm{\p_i^t}^2   + \frac{2}{\eta_t} \E\big( \norm{\nabla f_i(\x_i^t) - \nabla f_i(\x_i^{t+1})}^2\mid \F_{t+1} \big)\Big] \\
    \le \ &  (1 - \eta_t)^2(1 + \frac{\eta_t}{2})\norm{\p_i^t}^2 + \eta_t^2 \sigma^2 \\
    & + (1-\eta_t)^2L^2(1 + \frac{2}{\eta_t})(4nc^2 + 2)\alpha_t^2 \gamma_t^2  \\
    \le \ & (1 - \eta_t)\norm{\p_i^t}^2  + \eta_t^2 \sigma^2 + \frac{3 L^2}{\eta_t}(4 nc^2 + 2)\alpha_t^2 \gamma_t^2
\end{align*}
where in the last inequality we used $0 < \eta_t < 1$ and thus
\begin{align*}
    0 < (1 - \eta_t)^2 (1 + \frac{\eta_t}{2}) = 1 - \eta_t  + \frac{1}{2}\eta_t^3 - \frac{1}{2}\eta_t \le 1 - \eta_t. 
\end{align*}
We take $\tau_\eta = 2(\tau_\alpha + \tau_\gamma)/3$. Then, we obtain
\begin{align*}
    & \E\big( \norm{\p_i^{t+1}}^2 \mid \F_{t+1} \big) \\
    \le \ & (1 - \eta_t) \norm{\p_i^t}^2  \\
    & + \Big[c_\eta^2 \sigma^2 + \frac{3L^2}{c_\eta}(4nc^2 + 2)c_\alpha^2c_\gamma^2\Big] (t+\varphi)^{-2\tau_\eta}.
\end{align*}
Then, since $0 < 2\tau_\gamma < \tau_\alpha < 1$, we have $0 < \tau_\eta < 1$. Using Lemma \ref{lm:mse2as}, we obtain that for any $0 < 2\epsilon < \tau_\eta$,
\begin{align*}
    \P\Big(\lim_{t \rightarrow \infty} (t+1)^{\tau_\eta - 2\epsilon} \norm{\p_i^t}^2 = 0\Big) = 1, 
\end{align*}
and thus the lemma follows.
\end{proof}

To show that the network average $\ox^t$ almost surely converges to the minimum $\x^*$, we discuss two exclusive cases. We first show the existence of a local convergence region. Before that, we present a standard result in convex optimization. 

\begin{lemma}
\label{lm:gd-strongcvx}
Suppose function $h: \R^d \rightarrow \R$ is $\mu$-strongly convex and $L$-smooth with minimizer $\x^*$. Then, the iterates generated by gradient descent  $\x' =  \x - \alpha \nabla h(\x)$ with stepsize $0 < \alpha \le 2/(L + \mu)$ satisfy that $\norm{\x' - \x^*} \le (1 - \alpha \mu) \norm{ \x - \x^*}$.
\end{lemma}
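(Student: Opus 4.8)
\emph{Proof sketch.} Set $\bd := \x - \x^*$. Since $\nabla h(\x^*) = 0$, expanding the square gives
\[
\norm{\x' - \x^*}^2 = \norm{\bd - \alpha \nabla h(\x)}^2 = \norm{\bd}^2 - 2\alpha \inpd{\nabla h(\x), \bd} + \alpha^2 \norm{\nabla h(\x)}^2 .
\]
The plan is to control the cross term and the squared gradient term \emph{jointly} using the tight interplay between $\mu$-strong convexity and $L$-smoothness, rather than the two bounds separately. Concretely, I would invoke the standard combined strong-convexity/smoothness inequality, which for any $\x$ reads $\norm{\nabla h(\x) - \nabla h(\x^*)}^2 \le (\mu+L)\inpd{\nabla h(\x) - \nabla h(\x^*), \x - \x^*} - \mu L\,\norm{\x - \x^*}^2$; with $\nabla h(\x^*)=0$ this becomes $\norm{\nabla h(\x)}^2 \le (\mu+L)\inpd{\nabla h(\x), \bd} - \mu L\,\norm{\bd}^2$. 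Substituting this bound for $\norm{\nabla h(\x)}^2$ in the expansion and regrouping, one arrives at
\[
(1 - \alpha\mu)^2 \norm{\bd}^2 - \norm{\x' - \x^*}^2 \;\ge\; \alpha\bigl(2 - \alpha(\mu + L)\bigr)\bigl(\inpd{\nabla h(\x), \bd} - \mu\norm{\bd}^2\bigr).
\]
Both factors on the right are nonnegative over the admissible range: $2 - \alpha(\mu+L) \ge 0$ is precisely the stepsize hypothesis $\alpha \le 2/(L+\mu)$, and $\inpd{\nabla h(\x), \bd} - \mu\norm{\bd}^2 \ge 0$ is $\mu$-strong convexity applied to the pair $(\x,\x^*)$. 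Since $1 - \alpha\mu \ge (L-\mu)/(L+\mu) \ge 0$ on the same range, taking square roots gives $\norm{\x' - \x^*} \le (1-\alpha\mu)\norm{\bd}$, as claimed.

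An equivalent and perhaps more transparent route, available when $h$ is twice differentiable (which holds for the sum cost $f$ to which this lemma is applied, by Assumption~\ref{as:cvx}), is to write $\nabla h(\x) = \nabla h(\x) - \nabla h(\x^*) = M\bd$ with $M := \int_0^1 \nabla^2 h(\x^* + s\bd)\,ds$, a symmetric matrix with $\mu\I \preceq M \preceq L\I$. Then $\x' - \x^* = (\I - \alpha M)\bd$, so $\norm{\x'-\x^*} \le \norm{\I - \alpha M}_2 \norm{\bd}$, and since $\norm{\I - \alpha M}_2 = \max_{\lambda \in \mathrm{spec}(M)}\abs{1-\alpha\lambda}$ with $t \mapsto \abs{1-\alpha t}$ convex on $[\mu,L]$, this is at most $\max(\abs{1-\alpha\mu},\abs{1-\alpha L})$; a short case check shows this maximum equals $1-\alpha\mu$ exactly when $\alpha \le 2/(L+\mu)$.

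I do not expect a genuine obstacle here — this is a textbook contraction estimate. The only point worth flagging is that the naive approach of lower-bounding $2\alpha\inpd{\nabla h(\x),\bd}$ by a convex combination of the plain strong-convexity bound ($\ge \mu\norm{\bd}^2$) and the plain co-coercivity bound ($\ge \tfrac1L\norm{\nabla h(\x)}^2$) does \emph{not} reach the factor $1-\alpha\mu$; it only yields the weaker rate $\sqrt{1 - 2\alpha\mu L/(\mu+L)}$. One must therefore use the tight combined inequality (or the spectral argument), and it is exactly the threshold $\alpha = 2/(L+\mu)$ at which the leftover slack term changes sign.
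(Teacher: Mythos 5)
Your proposal is correct, and both of your routes deliver the lemma. Your first route is, at bottom, the same argument as the paper's, just packaged differently: the paper defines the deflated function $p(\x) = h(\x) - (\mu/2)\norm{\x - \x^*}^2$, writes $\x' - \x^* = (1-\alpha\mu)(\x - \x^*) - \alpha\nabla p(\x)$, and invokes co-coercivity of the convex $(L-\mu)$-smooth $p$, namely $\inpd{\nabla p(\x), \x - \x^*} \ge \tfrac{1}{L-\mu}\norm{\nabla p(\x)}^2$; your ``combined strong-convexity/smoothness inequality'' for $h$ is exactly that co-coercivity statement restated in terms of $\nabla h$, and your regrouped identity with the factor $\alpha(2-\alpha(\mu+L))$ is the same slack term the paper isolates. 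One small advantage of your packaging is that the combined inequality involves no division by $L-\mu$, so the degenerate case $L = \mu$ needs no separate treatment, whereas the paper handles it as a special (quadratic) case. Your second, spectral route via $\nabla h(\x) = M(\x - \x^*)$ with $\mu\I \preceq M \preceq L\I$ is genuinely different from the paper's proof of this lemma, but it is precisely the mean-value-theorem device the paper deploys later (in the proof of Lemma~\ref{lm:case2}), and it requires twice differentiability, which the first route does not. Your closing remark about the naive convex-combination bound only yielding $\sqrt{1 - 2\alpha\mu L/(\mu+L)}$ is accurate and correctly identifies why the tight combined inequality is needed.
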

\begin{proof}
For completeness, we prove this lemma. Define $p(\x) = h(\x) - (\mu/2)\norm{\x - \x^*}^2$, then $p$ is convex and $(L-\mu)$-smooth. If $L > \mu$, we have (Theorem 2.1.5 in \cite{nesterov2018lectures})
\begin{equation}
\label{eq:cvx-smooth-property}
    \inpd{\nabla p(\x), \x - \x^*} \ge \frac{1}{L - \mu}\norm{\nabla p(\x)}^2.
\end{equation}
Since
\begin{equation}
\label{eq:regd}
    \x' - \x^* = (1 - \alpha \mu)(\x - \x^*) - \alpha \nabla p(\x), 
\end{equation}
we obtain 
\begin{equation*}
\begin{split}
    & \norm{\x' - \x^*}^2 \\
    = \ & (1 - \alpha \mu)^2 \norm{\x - \x^*}^2 - 2\alpha(1 - \alpha \mu)\inpd{\x - \x^*, \nabla p(\x)} \\
    & \quad + \alpha^2 \norm{\nabla p(\x)}^2 \\
    \overset{\eqref{eq:cvx-smooth-property}}{\le} \ & (1 - \alpha \mu)^2 \norm{\x - \x^*}^2 - \frac{\alpha[2 - \alpha(\mu + L)]}{L - \mu} \norm{\nabla p(\x)}^2.
\end{split}
\end{equation*}
Since $\alpha \le 2/(\mu + L)$, the second term of the above display is nonnegative, so the desired relation is obtained. If $L = \mu$, then $h$ is a quadratic function and it turns out
\begin{equation*}
    h(\x) = h(\x^*) + \frac{\mu}{2} \norm{\x - \x^*}^2, 
\end{equation*}
and $\nabla p(\x) = 0$, so \eqref{eq:regd} reduces to $\x' - \x = (1 - \alpha \mu)(\x - \x^*)$, and taking Euclidean norms on both sides completes the proof.
\end{proof}

\begin{lemma}
\label{lm:case1}
Take $\tau_\eta = 2(\tau_\alpha + \tau_\gamma)/3$ and $0 < 2 \tau_\gamma < \tau_\alpha < 1$. Define the auxiliary threshold
\begin{align}
\label{eq:bargt}
    \og_t = \frac{\gamma_t}{L} - \frac{p_t}{L} -  c\sqrt{n}\alpha_t\gamma_t,
\end{align}
where $p_t = c_p(t+1)^{-(0.5\tau_\eta - \epsilon)}$ for some $c_p$ and arbitrary small $0 < \epsilon < 0.5\tau_\eta$. Almost surely, there exist some constant $c_p$, finite $T_0$ such that if for some $t \ge T_0$ we have $\norm{\ox^t - \x^*} \le \og_t$, then $\norm{\ox^t - \x^*} \le \og_t$ for all $t \ge T_0$.  
\end{lemma}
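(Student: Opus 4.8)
The plan is to prove a \emph{one-step forward invariance} of the region $\{\norm{\ox^t-\x^*}\le\og_t\}$ and then propagate it by induction along the sample path. On the almost sure event on which Lemma~\ref{lm:gdest-err} holds (with some random constant $c_p$), I will exhibit a finite $T_0$ — chosen large enough that $\og_t>0$ and $a\alpha_t/n\le 2/(L+\mu)$ for all $t\ge T_0$ — such that for every $t\ge T_0$,
\begin{align*}
    \norm{\ox^t-\x^*}\le\og_t \;\Longrightarrow\; \norm{\ox^{t+1}-\x^*}\le\og_{t+1}.
\end{align*}
The lemma then follows by induction starting from the first index at which $\norm{\ox^t-\x^*}\le\og_t$. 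Everything below is pathwise on that event, so no martingale machinery is needed.

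Starting from the averaged recursion \eqref{eq:gdmap-update} I would split the agent sum into $\N$ and $\A$, giving $\ox^{t+1}-\x^* = \big(\ox^t - \tfrac{\alpha_t}{n}\sum_{i\in\N}k_i^t\bv_i^t - \x^*\big) - \tfrac{\alpha_t}{n}\sum_{i\in\A}k_i^t\bv_i^t$. The key first observation is that, under the hypothesis $\norm{\ox^t-\x^*}\le\og_t$, \emph{no regular agent is clipped}: by the triangle inequality, $\nabla f_i(\x^*)=0$ (Assumptions~\ref{as:cvx}--\ref{as:zero-gd}), $L$-smoothness, the consensus bound $\norm{\x_i^t-\ox^t}\le c\sqrt{n}\alpha_t\gamma_t$ of Lemma~\ref{lm:consensus}, and the estimation bound $\norm{\bv_i^t-\nabla f_i(\x_i^t)}\le p_t$ of Lemma~\ref{lm:gdest-err},
\begin{align*}
    \norm{\bv_i^t}\le p_t + L\big(c\sqrt{n}\alpha_t\gamma_t+\og_t\big)=\gamma_t,
\end{align*}
where the equality is precisely the definition \eqref{eq:bargt} of $\og_t$. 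Hence $k_i^t=1$ for all $i\in\N$ and $\sum_{i\in\N}k_i^t\bv_i^t = a\nabla f(\ox^t) + \sum_{i\in\N}(\bv_i^t-\nabla f_i(\ox^t))$.

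Next I would bound the three resulting pieces. Viewing $\ox^t - \tfrac{a\alpha_t}{n}\nabla f(\ox^t)$ as a single gradient-descent step on the $\mu$-strongly convex, $L$-smooth function $f$ with stepsize $a\alpha_t/n\le 2/(L+\mu)$, Lemma~\ref{lm:gd-strongcvx} yields $\norm{\ox^t-\tfrac{a\alpha_t}{n}\nabla f(\ox^t)-\x^*}\le(1-\tfrac{a\mu\alpha_t}{n})\norm{\ox^t-\x^*}$. The regular-agent residual is bounded, again by smoothness, Lemma~\ref{lm:consensus} and Lemma~\ref{lm:gdest-err}, by $\tfrac{a\alpha_t}{n}(p_t+Lc\sqrt{n}\alpha_t\gamma_t)$, and the attacked-agent term by $\tfrac{b\alpha_t}{n}\gamma_t$ since the clipping bound $\norm{k_i^t\bv_i^t}\le\gamma_t$ of \eqref{eq:d-bound} holds for attacked agents as well (they still follow the protocol, Assumption~\ref{as:noise}). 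Combining and invoking $\norm{\ox^t-\x^*}\le\og_t$,
\begin{align*}
    \norm{\ox^{t+1}-\x^*}\le\Big(1-\frac{a\mu\alpha_t}{n}\Big)\og_t + \frac{a\alpha_t}{n}\big(p_t+Lc\sqrt{n}\alpha_t\gamma_t\big) + \frac{b\alpha_t}{n}\gamma_t.
\end{align*}

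The final, and hardest, step is to verify that the right-hand side is $\le\og_{t+1}$ for all large $t$. Substituting $\og_t=\gamma_t/L - p_t/L - c\sqrt{n}\alpha_t\gamma_t$ and collecting terms, the right-hand side becomes $\og_t + \tfrac{\alpha_t\gamma_t}{n}\big(b-\tfrac{a}{\kappa}\big) + \O(\alpha_t p_t) + \O(\alpha_t^2\gamma_t)$; here Assumption~\ref{as:resilience-ratio} ($\rho=b/n<1/(1+\kappa)$) is exactly what forces $b-a/\kappa<0$, so with $c_0:=\tfrac1n(a/\kappa-b)>0$ the dominant correction is $-c_0\alpha_t\gamma_t$, of order $t^{-(\tau_\alpha+\tau_\gamma)}$. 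All remaining positive terms are of strictly smaller order — $\alpha_t^2\gamma_t=\O(t^{-(2\tau_\alpha+\tau_\gamma)})$ since $\tau_\alpha>0$, and $\alpha_t p_t=\O(t^{-(\tau_\alpha+0.5\tau_\eta-\epsilon)})$ with $0.5\tau_\eta-\epsilon>\tau_\gamma$ because $\tau_\eta=\tfrac23(\tau_\alpha+\tau_\gamma)$, $\tau_\alpha>2\tau_\gamma$ and $\epsilon$ is taken small — as is the threshold decrement $\abs{\og_{t+1}-\og_t}=\O(t^{-(\tau_\gamma+1)})$, which is smaller than $\alpha_t\gamma_t$ because $\tau_\alpha<1$. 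Hence for $t$ large $\og_t - c_0\alpha_t\gamma_t + (\text{smaller-order terms})\le\og_{t+1}$, which is the desired one-step bound. The main obstacle is precisely this bookkeeping: one has to track \emph{every} residual (the variance/discretization terms producing $p_t$, the consensus residual $c\sqrt{n}\alpha_t\gamma_t$, and the moving-threshold decrement $\og_t-\og_{t+1}$) and confirm that the parameter constraints $\tau_\eta=\tfrac23(\tau_\alpha+\tau_\gamma)$, $2\tau_\gamma<\tau_\alpha<1$, together with the tight resilience condition $\rho<1/(1+\kappa)$, make the ``good drift'' $\tfrac{a\mu\alpha_t}{n}\og_t$ — which is of order $\tfrac{a}{n\kappa}\alpha_t\gamma_t$ — dominate the ``adversarial drift'' $\tfrac{b}{n}\alpha_t\gamma_t$ with enough slack to absorb all lower-order terms.
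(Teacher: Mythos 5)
Your proposal is correct and follows essentially the same route as the paper's proof: work pathwise on the almost-sure event of Lemma~\ref{lm:gdest-err}, show that inside the region $\{\norm{\ox^t-\x^*}\le\og_t\}$ no regular agent is clipped (the definition of $\og_t$ is exactly what makes $\norm{\bv_i^t}\le\gamma_t$), apply Lemma~\ref{lm:gd-strongcvx} to the averaged step, bound the consensus, noise, and attacked-agent residuals, and verify one-step invariance for large $t$ using $\rho<1/(1+\kappa)$ and $\tau_\alpha<1$. The only (immaterial) difference is in the last verification: the paper introduces $\Delta_t=(t+\varphi)^{\tau_\gamma}\og_t$ and uses $1-x\le e^{-x}$ and $\ln(1+x)\le x$, whereas you compare decay orders directly, noting $\abs{\og_{t+1}-\og_t}=\O(t^{-(1+\tau_\gamma)})$ is dominated by the net drift $-c_0\alpha_t\gamma_t=\O(t^{-(\tau_\alpha+\tau_\gamma)})$ — an equivalent bookkeeping.
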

\begin{proof}
Consider the sample path $\omega \in \Omega$ such that for some $c_{p,\o}$ we have for all $i \in \N$, 
\begin{align*}
    \norm{\p_i^{t, \o}} & = 
    \norm{\bv_i^{t, \o} - \nabla f_i(\x_i^{t, \o})} \\
    & \le p_{t, \o} = c_{p, \o} (t+1)^{-(0.5 \tau_\eta - \epsilon)}.
\end{align*}
By Lemma \ref{lm:gdest-err}, such sample paths have probability measure 1. Since $\tau_\alpha > 2 \tau_\gamma$, we have
\begin{align*}
    \tau_\gamma < (\tau_\alpha + \tau_\gamma)/3 - \epsilon = 0.5 \tau_\eta - \epsilon, 
\end{align*}
for arbitrarily small $\epsilon$. Then, in \eqref{eq:bargt}, $\gamma_t$ decays slower than $p_{t, \o}$ and $\alpha_t \gamma_t$, and thus there exists some finite $t_1$ such that $\forall t \ge t_{1, \o}$, $\og_{t, \o} > 0$. We decompose the update of network average $\ox^{t, \o}$, 
\begin{align}
\label{eq:avg-update}
\begin{aligned}
    \ox^{t+1, \o} 
    & = \ox^{t, \o} - \frac{\alpha_t}{n}\sum_{i=1}^n k_i^{t, \o}\bv_i^{t, \o} \\
    & = \ox^{t, \o} - \frac{\alpha_t}{n} \sum_{i \in \N} k_i^{t, \o} (\nabla f_i(\x_i^{t,\o}) + \p_i^{t, \o}) \\
    & \qquad - \frac{\alpha_t}{n} \sum_{i \in \A} k_i^{t, \o} \bv_i^{t, \o}. 
\end{aligned}
\end{align}
By Assumption \ref{as:zero-gd}, we have for any $i \in \N, \nabla f_i(\x^*) = 0$. Since $f_i$ is $L$-smooth and by the lemma hypothesis, 
\begin{align}
\label{eq:vt-decomp}
\begin{split}
    \norm{\bv_i^{t,\o}} 
    & = \norm{\nabla f_i(\x_i^{t, \o}) + \p_i^{t, \o} } \\ 
    & \le \norm{\nabla f_i(\x_i^{t, \o}) - \nabla f_i(\x^*)} + \norm{\p_i^{t, \o}} \\
    & \le L \norm{\x_i^{t, \o} - \x^*} + p_{t, \o} \\
    & \le L \left(\norm{\x_i^{t, \o} - \ox^{t, \o}} + \norm{\ox^{t, \o} - \x^*} \right) + p_{t, \o} \\
    & \overset{\eqref{eq:consensus-error-1}}{\le} \gamma_t. 
\end{split}
\end{align}
Thus, $k_i^{t,\o} = 1$ for all $i \in \N$. Then, we take $t_{2,\o}$ as the least $t \ge t_{1,\o}$ such that $\alpha_{t}\le 2/[(1-\rho)(\mu + L)]$. By Lemma \ref{lm:gd-strongcvx}, for $t \ge t_{2, \o}$, from \eqref{eq:avg-update} we have
\begin{align}
\begin{aligned}
\label{eq:local-ctr-upbd}
    & \norm{\ox^{t+1, \o} - \x^*} \\
    \le \ & \norm{\ox^{t, \o} - \frac{\alpha_t}{n} \sum_{i \in \N} \nabla f_i(\ox^{t, \o}) - \x^*} \\
    & + \norm{ \frac{\alpha_t}{n}\sum_{i \in \N}(\nabla f_i(\ox^{t, \o}) - \nabla f_i(\x_i^{t, \o}))} \\
    & + \norm {\frac{\alpha_t}{n} \sum_{i \in \N}\p_i^{t, \o}} + \norm{ \frac{\alpha_t}{n} \sum_{i \in \A} k_i^{t, \o} \bv_i^{t, \o}} \\
    \underset{\eqref{eq:consensus-error-1}}{\le} \ & [1 - \alpha_t \mu (1 - \rho)] \norm{\ox^{t,\o} - \x^*}  \\
    & + c L \sqrt{1 - \rho} \alpha_t^2 \gamma_t +  (1 - \rho)\alpha_t p_{t, \o} + \rho \alpha_t \gamma_t \\
    \underset{\eqref{eq:bargt}}{\le} \ & [1 - \alpha_t \mu( 1 - \rho)] \og_{t, \o} \\
    &  + c L \sqrt{1 - \rho} \alpha_t^2 \gamma_t +  (1 - \rho)\alpha_t p_{t, \o} \\
    & + \rho \alpha_t ( L \og_{t, \o} + p_{t, \o} + c L \sqrt{n} \alpha_t \gamma_t) \\
    = \ & [1 - \alpha_t(\mu (1- \rho) - \rho L)] \og_{t, \o} \\
    & + cL(\sqrt{1 - \rho} + \sqrt{n}\rho) \alpha_t^2 \gamma_t + \alpha_t p_{t, \o}.
\end{aligned}
\end{align}
We next show that, for large enough $t$, \eqref{eq:local-ctr-upbd} implies that $\norm{\ox^{t+1, \o} - \ox^*} \le \og_{t+1, \o}$. Define
\begin{align}
\begin{aligned}
\label{eq:deltat}
    \Delta_{t, \o} & 
    = (t+\varphi)^{\tau_\gamma}\og_{t, \o} \\
    & = \frac{c_\gamma}{L} - \frac{c_{p, \o} }{L(t+\varphi)^{(\tau_\alpha - 2 \tau_\gamma)/3 - \epsilon}} - \frac{c \sqrt{n} c_\alpha c_\gamma}{(t+\varphi)^{\tau_\alpha}}. 
\end{aligned}
\end{align}
Since $\tau_\alpha > 2 \tau_\gamma$ and $\epsilon$ is arbitrarily small, $\Delta_{t, \o}$ is increasing in $t$, and
\begin{align*}
    \og_{t+1, \o} 
    & = \frac{\Delta_{t+1, \o}}{(t+\varphi+1)^{\tau_\gamma}} \\
    & \ge \frac{\Delta_{t, \o}}{(t+\varphi+1)^{\tau_\gamma}} = \Big( \frac{t+\varphi}{t + \varphi+1} \Big)^{\tau_\gamma} \og_{t, \o}.
\end{align*}
By \eqref{eq:local-ctr-upbd}, to show $\norm{\ox^{t+1} - \ox^*} \le \og_{t+1}$, it suffices to have  
\begin{align*}
    & [1 - \alpha_t(\mu (1- \rho) - \rho L)] \og_{t, \o} \\
    & + cL(\sqrt{1 - \rho} + \sqrt{n}\rho) \alpha_t^2 \gamma_t + \alpha_t p_{t, \o} \\
    \le \ &   \Big( \frac{t+\varphi}{t+\varphi+1} \Big)^{\tau_\gamma} \og_{t, \o}.
\end{align*}
By the choice of $t_{2, \o}$, the left hand side of the above inequality is positive. Thus, we can divide $\og_{t, \o} = \Delta_{t, \o}(t+\varphi)^{-\tau_\gamma}$ from both sides and it leads to that
\begin{align}
\label{eq:1-xbound}
\begin{aligned}
    1 - \alpha_t \Big[ \mu(1 - \rho) - \rho L - \frac{cL(\sqrt{1 - \rho} + \sqrt{n} \rho)c_\alpha c_\gamma}{\Delta_{t, \o} (t+\varphi)^{\tau_\alpha}} \\
     - \frac{c_{p, \o}}{\Delta_{t, \o} (t+\varphi)^{(\tau_\alpha - 2 \tau_\gamma)/3 - \epsilon}}\Big] \le \Big( \frac{t+\varphi}{t+\varphi+1} \Big)^{\tau_\gamma}. 
\end{aligned}
\end{align}
By Assumption \ref{as:resilience-ratio} we have $\mu(1 - \rho) - \rho L > 0$, together with $(\tau_\alpha - 2 \tau_\gamma)/3 - \epsilon > 0$, there exists some finite $t_{3, \o} \ge t_{2,\o}$ such that for $t \ge t_{3, \o}$ the left side of \eqref{eq:1-xbound} is strictly less than 1. Using $1 - x \le e^{-x}$ for $x \ge 0$, to show \eqref{eq:1-xbound} it suffices to have
\begin{align}
\label{eq:lnbound}
\begin{aligned}
     \frac{\alpha_t}{\tau_\gamma}\Big[ \mu(1 - \rho) - \rho L - \frac{cL(\sqrt{1 - \rho} + \sqrt{n} \rho)c_\alpha c_\gamma}{\Delta_{t, \o} (t+\varphi)^{\tau_\alpha}} \\ - \frac{c_{p, \o}}{\Delta_{t, \o} (t+\varphi)^{(\tau_\alpha - 2 \tau_\gamma)/3 - \epsilon}}\Big] \ge \ln \frac{t+\varphi+1}{t+\varphi}.
\end{aligned}
\end{align}
Since $\Delta_{t, \o}$ monotonically increases to $c_\gamma/L$, we can find a finite $t_{4, \o}$ as the least $t \ge t_{3,\o}$ such that $\Delta_{t, \o} \ge c_\gamma/(2L)$. Then, for $t \ge t_{4, \o}$, using $\ln(1 + x) \le x$ for $x \ge 0$, to show \eqref{eq:lnbound} it suffices to have
\begin{align*}
    \frac{c_\alpha}{\tau_\gamma}\Big[ \mu(1 - \rho) - \rho L 
    - \frac{2cL^2(\sqrt{1 - \rho} + \sqrt{n} \rho)c_\alpha }{ (t+\varphi)^{\tau_\alpha}}  \\ - \frac{2c_{p, \o}L }{c_\gamma (t+\varphi)^{(\tau_\alpha - 2 \tau_\gamma)/3 - \epsilon}}\Big] \ge \frac{1}{(t+\varphi)^{1 - \tau_\alpha}},
\end{align*}
which holds true for some finite $t_{5, \o} \ge t_{4,\o}$ since $0 < \tau_\alpha < 1$, and thus for all $t \ge t_{5, \o}$. Taking $T_{0, \o} = t_{5,\o}$ concludes the proof. 
\end{proof}

\begin{lemma}
\label{lm:case2}
Choose $\tau_\alpha, \tau_\gamma, \og_t$ as in Lemma \ref{lm:case1}, and in addition $\tau_\alpha + \tau_\gamma < 1$. Suppose that for all $t \ge 0$, we have $\norm{\ox^t - \x^*} > \og_t$. Then, we have for any $0 < \tau < (\tau_\alpha - 2\tau_\gamma)/3$,
\begin{align*}
    \P\left( \lim_{t \rightarrow \infty} (t+1)^{\tau} \norm{\ox^t - \x^*} = 0 \right) = 1.
\end{align*}
\end{lemma}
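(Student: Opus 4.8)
The plan is to show that, restricted to the probability-one event of Lemma~\ref{lm:gdest-err} --- on which $\norm{\p_i^t} := \norm{\bv_i^t - \nabla f_i(\x_i^t)} \le p_t$ for all $i \in \N$ with a path-dependent constant $c_p$ --- the standing hypothesis ``$\norm{\ox^t - \x^*} > \og_t$ for all $t$'' forces $r_t := \norm{\ox^t - \x^*}$ to decay so fast that it must eventually drop below $\og_t$, a contradiction. Hence the hypothesized event is $\P$-null and the displayed almost-sure statement holds vacuously. (Equivalently: a sample path that never enters the local region of Lemma~\ref{lm:case1} cannot occur.) All estimates below are pathwise on that event, so the ``$\P(\cdot)=1$'' conclusion is immediate.

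First I would derive a one-step inequality for $r_t^2$. Starting from the network-average recursion \eqref{eq:gdmap-update}, expand $r_{t+1}^2$ into $r_t^2$, a cross term $-\tfrac{2\alpha_t}{n}\inpd{\ox^t-\x^*,\ \sum_{i} k_i^t\bv_i^t}$, and a quadratic term bounded by $\alpha_t^2\gamma_t^2$ via $\norm{k_i^t\bv_i^t}\le\gamma_t$ from \eqref{eq:d-bound}. For the cross term I would split the sum over $\A$ and $\N$: the attacked part is at least $-\rho\gamma_t r_t$ (again by $\norm{k_i^t\bv_i^t}\le\gamma_t$); for the regular part, write $k_i^t\bv_i^t = k_i^t\nabla f_i(\x_i^t) + k_i^t\p_i^t$, bound the $\p_i^t$-contribution by $-p_t r_t$ (using $0\le k_i^t\le 1$ and Lemma~\ref{lm:gdest-err}), split $\nabla f_i(\x_i^t) = \nabla f_i(\ox^t) + (\nabla f_i(\x_i^t)-\nabla f_i(\ox^t))$ and control the difference by $L$-smoothness (Assumption~\ref{as:smooth}) together with the consensus bound $\norm{\x_i^t-\ox^t}\le c\sqrt n\,\alpha_t\gamma_t$ from Lemma~\ref{lm:consensus}. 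Since each $f_i$ is convex with $\nabla f_i(\x^*)=0$ (Assumption~\ref{as:zero-gd}), every term $\inpd{\ox^t-\x^*,\nabla f_i(\ox^t)}$ is nonnegative, so $\sum_{i\in\N}k_i^t\inpd{\ox^t-\x^*,\nabla f_i(\ox^t)} \ge (\min_{j\in\N}k_j^t)\,\abs{\N}\,\inpd{\ox^t-\x^*,\nabla f(\ox^t)} \ge (\min_{j\in\N}k_j^t)\,\abs{\N}\,\mu r_t^2$ by $\mu$-strong convexity of $f$ (Assumption~\ref{as:cvx}). Assembling and dividing by $n$, the cross term is at least $(1-\rho)(\min_{j\in\N}k_j^t)\mu r_t^2 - (1-\rho)(Lc\sqrt n\,\alpha_t\gamma_t + p_t)r_t - \rho\gamma_t r_t$.

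The crux --- and the step where the condition number enters --- is lower-bounding $\min_{j\in\N}k_j^t$. For $j\in\N$ one has $\norm{\bv_j^t}\le Lr_t + Lc\sqrt n\,\alpha_t\gamma_t + p_t$, hence $k_j^t \ge \min\{1,\ \gamma_t/(Lr_t + Lc\sqrt n\,\alpha_t\gamma_t + p_t)\}$. Setting $\delta_t := p_t/\gamma_t + Lc\sqrt n\,\alpha_t$ one checks $\og_t = (\gamma_t/L)(1-\delta_t)$, so the standing hypothesis $r_t > \og_t$ is exactly $Lr_t > \gamma_t(1-\delta_t)$; feeding this into the two cases (according as $Lr_t + Lc\sqrt n\,\alpha_t\gamma_t + p_t$ exceeds $\gamma_t$ or not) gives, in both, $(\min_{j\in\N}k_j^t)\mu r_t^2 \ge (\gamma_t r_t/\kappa)(1-\delta_t)$. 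Because $\tau_\eta = 2(\tau_\alpha+\tau_\gamma)/3$ and $\tau_\alpha > 2\tau_\gamma$, $\delta_t = O\big((t+\varphi)^{-((\tau_\alpha-2\tau_\gamma)/3-\epsilon)}\big)\to 0$ --- precisely the exponent in the statement. Substituting, the cross term is at least $\gamma_t r_t\big[(1-\rho)/\kappa - \rho - (1-\rho)(1+1/\kappa)\delta_t\big]$; by Assumption~\ref{as:resilience-ratio} the bracket's leading part $2c_0 := (1-\rho)/\kappa - \rho$ is strictly positive, so it exceeds $c_0 > 0$ for all $t$ past some finite $T_1$, giving $r_{t+1}^2 \le r_t^2 - 2c_0\alpha_t\gamma_t r_t + \alpha_t^2\gamma_t^2$ for $t\ge T_1$.

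To close, I would complete the square: for $t\ge T_1$, $r_{t+1}^2 \le (r_t - c_0\alpha_t\gamma_t)^2 + \alpha_t^2\gamma_t^2$. For large $t$ the hypothesis gives $r_t > \og_t \ge \gamma_t/(2L) > c_0\alpha_t\gamma_t$ (as $\alpha_t\to0$), so taking square roots with $\sqrt{a^2+b}\le a+b/(2a)$ and $a = r_t - c_0\alpha_t\gamma_t \ge \gamma_t/(4L)$ yields $r_{t+1} \le r_t - c_0\alpha_t\gamma_t + 2L\alpha_t^2\gamma_t \le r_t - \tfrac{c_0}{2}\alpha_t\gamma_t$ for all $t$ beyond some finite $T$. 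Summing from $T$ gives $r_t \le r_T - \tfrac{c_0}{2}\sum_{s=T}^{t-1}\alpha_s\gamma_s$, and since $\sum_s\alpha_s\gamma_s \propto \sum_s(s+\varphi)^{-(\tau_\alpha+\tau_\gamma)}$ diverges (because $\tau_\alpha+\tau_\gamma<1$) this forces $r_t\to-\infty$, contradicting $r_t\ge0$; so the hypothesis cannot hold on that probability-one event, and the claim follows. I expect the regular-agent estimate of the second and third paragraphs to be the main obstacle: without clipping one would recover the full strong-convexity drift $\mu r_t^2$, but clipping (active exactly once $r_t\gtrsim\gamma_t/L$) salvages only the weaker $\approx\gamma_t r_t/\kappa$, and it is precisely $\rho<1/(1+\kappa)$ that keeps the net coefficient $(1-\rho)/\kappa-\rho$ positive; one must also be careful to pull $\min_{j\in\N}k_j^t$ out of the sum (legitimate only because $\inpd{\ox^t-\x^*,\nabla f_i(\ox^t)}\ge0$) and to verify that $\delta_t$ decays at the advertised rate $(\tau_\alpha-2\tau_\gamma)/3$.
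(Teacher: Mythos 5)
Your argument is correct, but it reaches the conclusion by a genuinely different route than the paper. Writing $r_t:=\norm{\ox^t-\x^*}$, you show the standing hypothesis is self-defeating --- on the probability-one event of Lemma \ref{lm:gdest-err}, keeping $r_t>\og_t$ at every step yields the additive decrease $r_{t+1}\le r_t-\tfrac{c_0}{2}\alpha_t\gamma_t$ with $\sum_s\alpha_s\gamma_s=\infty$ (this is where $\tau_\alpha+\tau_\gamma<1$ enters, in both proofs), so the event is $\P$-null and the stated conclusion holds vacuously. The paper instead takes the hypothesis at face value and extracts a rate: it introduces the uniform surrogate clipping coefficient $\hat k^{t}=\gamma_t/(L r_t+cL\sqrt n\,\alpha_t\gamma_t+p_t)$, shows $\hat k^t\le k_i^t$ for $i\in\N$ using exactly your observation that $r_t>\og_t$, reduces the heterogeneous $k_i^t$ to $\hat k^t$ via a mean-value/eigenvalue argument on the Hessians $\M_i^t$ (where twice differentiability is used), proves $\sup_t r_t<\infty$ through the auxiliary system of Lemma \ref{lm:bddyanmics} so that $\hat k^t\gtrsim(t+\varphi)^{-\tau_\gamma}$, and feeds the resulting multiplicative recursion into Lemma \ref{lm:cvg_diff_main} to land on the exponent $(\tau_\alpha-2\tau_\gamma)/3$. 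Your replacement of the eigenvalue step by pulling $\min_{j\in\N}k_j^t$ out of the sum is legitimate because each $\inpd{\ox^t-\x^*,\nabla f_i(\ox^t)}$ is nonnegative, and it is simpler: it needs neither twice differentiability nor the boundedness step. What each buys: the paper's version supplies the quantitative exponent that feeds into Theorem \ref{thm:rdgd}'s rate $\min(\tau_\gamma,(\tau_\alpha-2\tau_\gamma)/3)$, while yours shows the case is empty, which is logically sufficient for the lemma as stated and, rerun from an arbitrary starting time $T'\ge T_0$, gives the cleaner conclusion that almost every path eventually enters and (by Lemma \ref{lm:case1}) remains in the tube $\{r_t\le\og_t\}$, so only the Case-1 rate $\tau_\gamma$ survives. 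The one caveat is that a vacuous proof of this lemma must be accompanied by that last observation (applying your contradiction from time $T_0$ onward) for the two-case structure of the theorem's proof to remain intact.
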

\begin{proof}
Consider the sample path $\o \in \Omega$ such that Lemma \ref{lm:case1} holds, and since such set of $\o$ has probability measure 1, results holds on such path $\o$ will hold almost surely. Similar to \eqref{eq:vt-decomp}, we have for any $i \in \N$,
\begin{align}
\begin{aligned}
\label{eq:vtbound2}
    \norm{\bv_i^{t, \o}} 
    & \le \norm{\nabla f_i(\x_i^{t, \o})} + p_{t, \o} \\
    & \le L\norm{\ox^{t, \o} - \x^*} + cL \sqrt{n}\alpha_t \gamma_t + p_{t, \o}.
\end{aligned}
\end{align}

\textit{Step I: setup recursion.} Define
\begin{align}
\label{eq:hatkt}
    \hat{k}^{t, \o} = \frac{\gamma_t}{L\norm{\ox^{t, \o} - \x^*} + c L \sqrt{n} \alpha_t\gamma_t + p_{t,\o}}.
\end{align}
Then, by \eqref{eq:vtbound2} we have
\begin{align}
\label{eq:kt-bd1}
    \hat{k}^{t, \o} \le \gamma_t \norm{\bv_i^{t, \o}}^{-1}.
\end{align}
Recall that by the definition of $\og_{t, \o}$ in \eqref{eq:bargt}, 
\begin{align*}
    \gamma_t = L \og_{t, \o} + c L \sqrt{n} \alpha_t \gamma_t + p_{t, \o}. 
\end{align*}
By the lemma hypothesis that  $\norm{\ox^{t, \o} - \x^*} \ge \og_{t, \o}$, we have $\hat{k}^{t, \o} \le 1$. Thus,
\begin{align}
\label{eq:ktbd}
    \hat{k}^{t, \o} \le k_i^{t, \o} =  \min\big(1, \gamma_t\norm{\bv_i^{t, \o}}^{-1}\big).
\end{align}
By Assumption \ref{as:cvx}, for $i \in \N$, $f_i$ is twice differentiable and convex, by the mean value  theorem, there exists some matrix $\M_i^{t, \o} \succeq 0$ such that 
\begin{align*}
    \nabla f_i(\ox^{t, \o}) - \nabla f_i(\x^*) = \M_i^{t, \o} (\ox^{t, \o} - \x^*). 
\end{align*}
Define 
\begin{align*}
    \M^{t, \o} = \frac{1}{|\N|} \sum_{i \in \N} \M_i^{t, \o}. 
\end{align*}
By Assumption \ref{as:cvx} and \ref{as:smooth}, we have
\begin{align*}
    0 \preceq \M_i^{t, \o} \preceq L \I, \ \mu \I \preceq \M^{t, \o} \preceq L\I.
\end{align*}
From \eqref{eq:avg-update}, we have the relation
\begin{align}
\label{eq:main-recursion-case2}
\begin{split}
    & \norm{\ox^{t+1, \o} - \x^*} \\
    \le \ & \norm{\ox^{t, \o} - \frac{\alpha_t}{n} \sum_{i \in \N} k_i^{t, \o} \nabla f_i(\ox^{t, \o}) - \x^*} \\
    &  + \norm{ \frac{\alpha_t}{n}\sum_{i \in \N}k_i^{t, \o}(\nabla f_i(\ox^{t, \o}) - \nabla f_i(\x_i^{t, \o} ))} \\
    &  + \norm {\frac{\alpha_t}{n} \sum_{i \in \N}k_i^{t, \o} \p_i^{t, \o}} + \norm{ \frac{\alpha_t}{n} \sum_{i \in \A} k_i^{t, \o} \bv_i^{t, \o}}.
\end{split}
\end{align}
We have that
\begin{align}
\label{eq:case2cntr1}
\begin{aligned}
& \norm{\ox^{t, \o} - \frac{\alpha_t}{n} \sum_{i \in \N} k_i^{t, \o} \nabla f_i(\ox^{t, \o}) - \x^*} \\ 
\le \ & \norm{\ox^{t, \o} -\x^* - \frac{\alpha_t}{n} \sum_{i \in \N}k_i^{t, \o} \M_i^{t, \o} (\ox^{t, \o} - \x^*)} \\
\le \ & \norm{\I - \frac{\alpha_t}{n} \sum_{i \in \N} k_i^{t, \o} \M_i^{t, \o}}_2 \norm{\ox^{t, \o}  - \x^*  }.
\end{aligned}
\end{align}
Let $\lambda_1(\cdot)$ denote the largest eigenvalue. Take $T_1$ as the least $t$ such that $\alpha_t < 1/[L(1 - \rho)]$, then for $t \ge T_1$, the symmetric matrix $\I - (\alpha_t/n) \sum_{i \in \N} k_i^{t, \o} \M_i^{t, \o}$ is positive definite, and thus
\begin{align*}
    \norm{\I - \frac{\alpha_t}{n} \sum_{i \in \N} k_i^{t, \o} \M_i^{t, \o}}_2 = \lambda_1 \Big( \I - \frac{\alpha_t}{n} \sum_{i \in \N} k_i^{t, \o} \M_i^{t, \o} \Big), \\
    \norm{\I -  \frac{\alpha_t \hat{k}^{t, \o}}{n} \sum_{i \in \N} \M_i^{t, \o}}_2 = \lambda_1\Big( \I -  \frac{\alpha_t \hat{k}^{t, \o}}{n} \sum_{i \in \N} \M_i^{t, \o} \Big).
\end{align*}
Next, we use the fact that for any pair of symmetric matrices $A, B$, 
\begin{align*}
    \lambda_1(A + B) \le \lambda_1(A) + \lambda_1(B).
\end{align*}
It follows that
\begin{align*}
\begin{aligned}
    & \lambda_1\Big(\I - \frac{\alpha_t}{n} \sum_{i \in \N} k_i^{t, \o} \M_i^{t, \o}\Big) \\
    = \ &  \lambda_1\Big( \I -  \frac{\alpha_t \hat{k}^{t, \o}}{n} \sum_{i \in \N} \M_i^{t, \o}\Big) \\
    & + \lambda_1\Big( \frac{\alpha_t}{n} \sum_{i \in \N }(\hat{k}^{t, \o} - k_i^{t, \o}) \M_i^{t, \o} \Big)
\end{aligned}
\end{align*}
Since for all $ i \in \N, \hat{k}^{t, \o} - k_i^{t, \o} \le 0$ and $\M_i^{t, \o}$ is positive semi-definite, we have
\begin{align}
\label{eq:2normbound}
    \norm{\I - \frac{\alpha_t}{n} \sum_{i \in \N} k_i^{t, \o} \M_i^{t, \o}}_2 \le  \norm{\I -  \frac{\alpha_t \hat{k}^{t, \o}}{n} \sum_{i \in \N} \M_i^{t, \o}}_2.
\end{align}
Combing relations \eqref{eq:case2cntr1} and \eqref{eq:2normbound} we obtain
\begin{align*}
    & \norm{\ox^{t, \o} - \frac{\alpha_t}{n} \sum_{i \in \N} k_i^{t, \o} \nabla f_i(\ox^{t, \o}) - \x^*} \\
    \le \ & \norm{\I - \alpha_t (1 - \rho)\hat{k}^{t, \o} \M^{t, \o}}_2 \norm{\ox^{t, \o} - \x^*  } \\
    \le \ & [1 - \alpha_t  \mu (1 - \rho) \hat{k}^{t,\o} ]\norm{\ox^{t, \o}  - \x^*}.
\end{align*} 
Combing the above with
\eqref{eq:ktbd}-\eqref{eq:main-recursion-case2} leads to the recursion
\begin{align}
\begin{aligned}
\label{eq:case2cntr2}
& \norm{\ox^{t+1, \o} - \x^*} \\
\le \ &  [1 - \alpha_t  \mu (1 - \rho) \hat{k}^{t,\o} ]\norm{\ox^{t, \o}  - \x^*} \\
& + c L \sqrt{1 - \rho} \alpha_t^2 \gamma_t +  (1 - \rho)\alpha_t p_{t, \o} + \rho \alpha_t \gamma_t \\
= \ & \big[1 - \alpha_t[\mu(1 - \rho) - \rho L] \hat{k}^{t,\o} \big] \norm{\ox^{t, \o} - \x^*} \\ 
& + cL(\sqrt{1 - \rho} + \rho\sqrt{n}) \alpha_t^2 \gamma_t + \alpha_t p_{t, \o}.
\end{aligned}
\end{align}

\textit{Step II: lower bound for $\hat{k}^{t, \o}$.} We next show that there exists some positive constant $c_{k,\o}$ such that
\begin{align}
\label{eq:hatk-bd}
    \hat{k}^{t, \o} \ge c_{k, \o}(t+\varphi )^{-\tau_\gamma}. 
\end{align}
By the definition of $\hat{k}^{t, \o}$ in \eqref{eq:hatkt}, to show \eqref{eq:hatk-bd} it suffices to show that
\begin{align}
\label{eq:err-bdd}
    \sup _{t \ge 0} \norm{\ox^{t, \o} - \x^*} < \infty, 
\end{align}
which is equivalent to show that $\sup _{t \ge T_1} \norm{\ox^{t, \o} - \x^*} < \infty$. Define the system 
\begin{align*}
    \hat{m}_{t+1, \o} 
    & = \big[1 - \frac{\big(\mu(1 - \rho) - \rho L\big)\alpha_t \gamma_t}{L m_{t, \o} + cL\sqrt{n} \alpha_t \gamma_t + p_{t, \o} } \big] m_{t, \o} \\ 
& \quad+ cL(\sqrt{1 - \rho} + \rho\sqrt{n}) \alpha_t^2 \gamma_t + \alpha_t p_{t, \o},\\
m_{t+1, \o} & = \max\big(\hat{m}_{t+1, \o}, m_{t, \o}\big), 
\end{align*}
with initial condition $m_{T_1, \o} = \norm{\ox^{T_1, \o} - \x^*}$. Note that since $2\tau_\alpha + \tau_\gamma > 4\tau_\alpha/3 + \tau_\gamma/3$, for some constant $c_{m, \o}$ we have
\begin{align*}
   &  cL(\sqrt{1 - \rho} + \rho\sqrt{n}) \alpha_t^2 \gamma_t + \alpha_t p_{t, \o} 
    \\ = \ & c_{m, \o}(t+\varphi)^{-(\frac{4}{3}\tau_\alpha + \frac{1}{3}\tau_\gamma - \epsilon)}. 
\end{align*}
Then, since $\tau_\alpha > 2\tau_\gamma$, the dynamics of $\{m_{t, \o} \}_{t \ge T_1}$ falls into the pursuit of Lemma \ref{lm:bddyanmics} in Appendix and leads to $\sup_{t \ge T_1} m_{t, \o} < \infty$. By \eqref{eq:case2cntr2} and the definition of $m_{t, \o}$, $\forall t \ge T_1, \norm{\ox^{t, \o} - \x^*} \le m_{t, \o}$, and thus \eqref{eq:err-bdd} follows.

\textit{Step III: convergence rate.} Combing \eqref{eq:case2cntr2} and \eqref{eq:hatk-bd}, we have 
\begin{align*}
    & \norm{\ox^{t+1, \o} - \x^*} \\
    \le & \big[1 - c_{k, \o} c_\alpha [\mu(1 - \rho) - \rho L] (t+\varphi)^{-(\tau_\alpha + \tau_\gamma)} \big] \norm{\ox^{t, \o} - \x^*} \\
    & + c_{m, \o}(t+\varphi)^{-(\frac{4}{3}\tau_\alpha + \frac{1}{3}\tau_\gamma - \epsilon)}. 
\end{align*}
Then, with $\tau_\alpha + \tau_\gamma < 1$, we can apply Lemma \ref{lm:cvg_diff_main} in Appendix to obtain the desired convergence rate.
\end{proof} 

\begin{proof}[Proof of Theorem \ref{thm:rdgd}]
From Lemma \ref{lm:case1} and Lemma \ref{lm:case2}, we have for every $0 \le \tau < \min\big(\tau_\gamma, (\tau_\alpha - 2\tau_\gamma)/3 \big)$, 
\begin{align*}
    \P\big( \lim_{t \rightarrow \infty} (t+1)^{\tau} \norm{\ox^t - \x^*} = 0) = 1. 
\end{align*}
By the triangular inequality and \eqref{eq:consensus-error-1}, for every $i \in [n]$
\begin{align*}
    \norm{\x_i^t - \x^*} 
    \le \ &  \norm{\ox^t - \x^*} + \norm{\x_i^t - \ox^t} \\
    \le \ & \norm{\ox^t - \x^*} +  c \sqrt{n}\alpha_t \gamma_t. 
\end{align*}
Since $\tau < \tau_\alpha + \tau_\gamma$, we have
\begin{align*}
    \P\left( \lim_{t \rightarrow \infty} (t+1)^{\tau} \norm{\x_i^t - \x^*} = 0 \right) = 1. 
\end{align*}
\end{proof}

        \section{Conclusion}
\label{sec:conclusion}
In this paper, we have studied a relatively unexplored threat model in distributed optimization with arbitrary gradient attack, and we have proposed a distributed stochastic gradient method \cvrg\ that combines local variance reduced gradient estimation and clipping. We have identified an upper bound of the fraction of attacked agents that relates to the condition number of the aggregated objective function. Under some similarity conditions among local objective functions, we have established the almost sure convergence of \cvrg\ to the exact minimum, which is empirically supported by experiments on both synthetic dataset and image classification datasets. Future directions include extending \cvrg\ to nonconvex or finite sum objective functions where different upper tolerance bounds on the fraction of attacked agents and new similarity conditions may be needed, involving techniques to mitigate the impact of data heterogeneity such as gradient tracking, and understanding the convergence in other notions.

        \bibliographystyle{IEEEtran}
        \bibliography{refs}

        %%%%%%%%%%%%%%%%%%%%%%%%%%%%%%%%%%%%%%%%%%%%%%%%%%%%%%%%%%%%%%%%%%%%%%%%%%%%%%%
        %%%%%%%%%%%%%%%%%%%%%%%%%%%%%%%%%%%%%%%%%%%%%%%%%%%%%%%%%%%%%%%%%%%%%%%%%%%%%%%
        % APPENDIX
        %%%%%%%%%%%%%%%%%%%%%%%%%%%%%%%%%%%%%%%%%%%%%%%%%%%%%%%%%%%%%%%%%%%%%%%%%%%%%%%
        %%%%%%%%%%%%%%%%%%%%%%%%%%%%%%%%%%%%%%%%%%%%%%%%%%%%%%%%%%%%%%%%%%%%%%%%%%%%%%%
        \newpage
        % \appendix
        % \onecolumn
        % \section{You \emph{can} have an appendix here.}
        \section*{Appendix}
        \appendix
\begin{lemma}[Lemma 5 in \cite{kar2011convergence}]
\label{lm:cvg_diff_main}
Consider the scalar positive sequences $0 < u_t \le 1$ that
\begin{align*}
    u_t = u_0(t+1)^{-a}, \text{ and } w_t = w_0(t+1)^{-b}, 
\end{align*}
with $0 \le a < 1$ and $a < b$. Then for 
\begin{align*}
    y_{t+1}  = (1 - u_t)y_t + w_t,
\end{align*}
for every $0 < \epsilon < b - a$, $\lim_{t \rightarrow \infty} (t+1)^{b - a - \epsilon} y_t = 0$.  

% 2) If $a = 1$, the above conclusion in 1) holds true if in addition $\epsilon > b - a - u_0$.
\end{lemma}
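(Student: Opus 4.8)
The plan is to analyze the reweighted sequence $z_t := (t+1)^{\delta} y_t$, where $\delta := b - a - \epsilon \in (0, b - a)$, and to prove directly that $z_t \to 0$. Because $0 < u_t \le 1$ and $w_t > 0$, the recursion keeps $y_t \ge 0$, so $z_t \ge 0$ and it suffices to establish $\limsup_{t \to \infty} z_t \le 0$. First I would derive the recursion obeyed by $z_t$: substituting $y_t = (t+1)^{-\delta} z_t$ into $y_{t+1} = (1 - u_t) y_t + w_t$ gives
\begin{align*}
    z_{t+1} = \Big( \frac{t+2}{t+1} \Big)^{\delta} (1 - u_t)\, z_t + (t+2)^{\delta} w_0 (t+1)^{-b}.
\end{align*}

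Next I would simplify both coefficients for large $t$. Since $t + 2 \le 2(t+1)$ and $b - \delta = a + \epsilon$, the forcing term satisfies $(t+2)^{\delta} w_0 (t+1)^{-b} \le 2^{\delta} w_0 (t+1)^{-(a+\epsilon)}$. For the multiplicative factor I would invoke $(1+h)^{\delta} \le 1 + 2\delta h$, valid for all sufficiently small $h > 0$ since $\lim_{h \to 0}((1+h)^{\delta} - 1)/h = \delta$; working with this large-$t$ asymptotic (rather than the elementary $(1+h)^{\delta} \le 1 + \delta h$, which would require $\delta \le 1$) lets the argument cover arbitrary $\delta = b - a - \epsilon$. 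Hence for $t$ large,
\begin{align*}
    \Big( \frac{t+2}{t+1} \Big)^{\delta} (1 - u_t) \le \Big( 1 + \frac{2\delta}{t+1} \Big)\big( 1 - u_0 (t+1)^{-a} \big) \le 1 - u_0 (t+1)^{-a} + \frac{2\delta}{t+1}.
\end{align*}
This is the one place the hypothesis $a < 1$ is decisive: the spurious term $2\delta/(t+1)$ decays strictly faster than $u_0 (t+1)^{-a}$, so beyond some threshold $T$ it is dominated, $2\delta/(t+1) \le (u_0/2)(t+1)^{-a}$, leaving the clean recursion
\begin{align*}
    z_{t+1} \le \big( 1 - \tfrac{u_0}{2}(t+1)^{-a} \big) z_t + 2^{\delta} w_0 (t+1)^{-(a+\epsilon)}, \qquad t \ge T.
\end{align*}

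Finally I would drive $z_t$ to zero from this recursion by a standard threshold argument. Set $\gamma_t := (u_0/2)(t+1)^{-a}$ and $\beta_t := 2^{\delta} w_0 (t+1)^{-(a+\epsilon)}$; the decisive feature is $\beta_t/\gamma_t = (2^{\delta+1} w_0/u_0)(t+1)^{-\epsilon} \to 0$. Fix $\eta > 0$ and pick $T_\eta \ge T$ with $\beta_t \le \eta \gamma_t$ for $t \ge T_\eta$; then $z_{t+1} - \eta \le (1 - \gamma_t)(z_t - \eta)$ for all $t \ge T_\eta$. Since $1 - \gamma_t \in (0,1)$ for large $t$, this relation shows that once $z_t \le \eta$ the sequence stays below $\eta$, while if $z_t > \eta$ for every $t \ge T_\eta$ then $z_{t+1} - \eta \le (z_{T_\eta} - \eta)\prod_{s=T_\eta}^{t}(1 - \gamma_s) \to 0$ because $\sum_s \gamma_s = \infty$ (again using $a < 1$). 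Either way $\limsup_{t} z_t \le \eta$, and letting $\eta \downarrow 0$ gives $\limsup_t z_t \le 0$; combined with $z_t \ge 0$ this yields $(t+1)^{\delta} y_t = z_t \to 0$, which is the claim.

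The hard part will be the coefficient bookkeeping in the reduction step: I must confirm that boosting $y_t$ by the growing weight $(t+1)^{\delta}$ injects only an $\O(1/(t+1))$ multiplicative perturbation and that this is swallowed by the contraction $(t+1)^{-a}$ — which is exactly why $a < 1$ is needed and why the stated rate $b - a - \epsilon$ survives even for arbitrarily large $b - a$. The role of the arbitrarily small $\epsilon$ is equally essential: it is what makes $\beta_t/\gamma_t \to 0$, so that the forcing term is asymptotically negligible against the contraction and $z_t$ is pushed to $0$ rather than to a positive constant.
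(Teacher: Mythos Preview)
The paper does not supply its own proof of this lemma; it is quoted verbatim from an external reference and left unproved in the appendix. Your argument is correct and self-contained, and in fact the reweighting device you use---multiplying by $(t+1)^{b-a-\epsilon}$, bounding the growth ratio $((t+2)/(t+1))^{\delta}\le 1+2\delta/(t+1)$, and then absorbing this $\mathcal{O}(1/(t+1))$ perturbation into the $(t+1)^{-a}$ contraction via $a<1$---is exactly the maneuver the paper itself carries out in the proof of Lemma~\ref{lm:mse2as}, so your approach is fully aligned with the paper's toolkit.

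One minor remark: you assert $y_t\ge 0$ because ``the recursion keeps $y_t\ge 0$'', but this requires $y_0\ge 0$, which the lemma statement does not stipulate. This is harmless in context (every application in the paper has $y_t$ a norm or squared norm), and in general the homogeneous part $|y_0|\prod_{s<t}(1-u_s)\le |y_0|\exp\bigl(-c\,t^{1-a}\bigr)$ decays faster than any polynomial, so a one-line reduction to the case $y_0\ge 0$ would close the gap.
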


\begin{lemma}[Lemma 25 in \cite{kar2012distributed}]
\label{lm:sum_zero_limit}
Let the sequences $\{u_t\}, \{w_t\}$ be given by 
\begin{align*}
    u_t = u_0(t+1)^{-a}, w_t = w_0(t+1)^{-b}
\end{align*}
where $u_0, w_0, a \ge 0$, and $b > a$. The for arbitrary fixed $j$, 
\begin{align*}
    \lim_{t \rightarrow \infty} \sum_{k=j}^{t-1} \Big[ \Big( \Pi_{l = k+1}^{t - 1} ( 1 - u(t) ) \Big) w_t \Big] = 0.
\end{align*}
\end{lemma}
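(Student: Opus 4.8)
The plan is to identify the displayed sum as the closed form of the linear recursion already analyzed in Lemma~\ref{lm:cvg_diff_main}, so that the conclusion follows essentially for free. Reading the product index as $l$ and the summand as $w_k$ (the indexing consistent with how this quantity arises in~\eqref{eq:vttailbd}), I set
\begin{align*}
    S_t := \sum_{k=j}^{t-1}\left(\prod_{l=k+1}^{t-1}(1-u_l)\right) w_k,
\end{align*}
with the usual conventions that an empty product equals $1$ and an empty sum equals $0$, so that $S_j = 0$. The key structural observation is that $\{S_t\}_{t \ge j}$ obeys the one-step recursion $S_{t+1} = (1-u_t) S_t + w_t$. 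Indeed, in $S_{t+1} = \sum_{k=j}^{t} \left(\prod_{l=k+1}^{t}(1-u_l)\right) w_k$ every product with $k \le t-1$ carries the extra factor $(1-u_t)$, which I pull out, while the $k=t$ term has an empty product equal to $1$ and so contributes exactly $w_t$; this is the only genuine computation and it is a short factoring step.

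Once this recursion is established, $\{S_t\}$ is precisely the sequence driven by $u_t = u_0(t+1)^{-a}$ and $w_t = w_0(t+1)^{-b}$ to which Lemma~\ref{lm:cvg_diff_main} applies in the regime $0 < a < 1$, $0 < u_t \le 1$, $a < b$ (the regime in force at every invocation in this paper, in particular in~\eqref{eq:vttailbd}). That lemma gives $\lim_{t \to \infty} (t+1)^{b-a-\epsilon} S_t = 0$ for each $0 < \epsilon < b-a$; since $b - a - \epsilon > 0$ makes $(t+1)^{b-a-\epsilon} \ge 1$, this forces $S_t \to 0$, which is the claim. The one bookkeeping point is that Lemma~\ref{lm:cvg_diff_main} starts its recursion at $t=0$ whereas $S_t$ starts at $t=j$ with $S_j = 0$. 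I would reconcile this by letting $\tilde{y}_t$ solve the same recursion from $\tilde{y}_0 = 0$, to which Lemma~\ref{lm:cvg_diff_main} applies directly, and noting that the difference $S_t - \tilde{y}_t$ satisfies the homogeneous recursion, hence equals $(S_j - \tilde{y}_j)\prod_{l=j}^{t-1}(1-u_l)$, which tends to $0$ because $\sum_l u_l = \infty$ when $a < 1$; thus $S_t$ and $\tilde{y}_t$ share the same limit $0$.

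The argument is short, so the main obstacle is really just fidelity: correctly matching the indices in the statement and verifying the factoring identity, together with the observation that the divergence $\sum_l u_l = \infty$ (equivalently $a < 1$) is what drives both the product decay above and the conclusion; for $a > 1$ the products converge to positive limits and the sum need not vanish, so $a < 1$ is the operative hypothesis. If one prefers to avoid the starting-index reconciliation entirely, I would instead argue directly: the telescoping identity $\left(\prod_{l=k+1}^{t-1}(1-u_l)\right) u_k = \prod_{l=k+1}^{t-1}(1-u_l) - \prod_{l=k}^{t-1}(1-u_l)$ sums to $1 - \prod_{l=j}^{t-1}(1-u_l) \le 1$, so writing $S_t = \sum_{k=j}^{t-1}\left(\prod_{l=k+1}^{t-1}(1-u_l)\right) u_k \cdot (w_k/u_k)$ exhibits $S_t$ as a sub-probability weighted average of $w_k/u_k = (w_0/u_0)(k+1)^{-(b-a)} \to 0$; a standard Toeplitz/Silverman argument (split into finitely many early terms, whose products vanish as $t\to\infty$, and late terms bounded by $\sup_{k \ge N} w_k/u_k$) then yields $S_t \to 0$.
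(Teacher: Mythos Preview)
The paper does not supply its own proof of this lemma; it is quoted from \cite{kar2012distributed} and stated in the Appendix without argument, so there is nothing to compare your approach against. Your proof is correct: the identification $S_{t+1}=(1-u_t)S_t+w_t$ is exactly the recursion of Lemma~\ref{lm:cvg_diff_main}, and your handling of the shifted starting index via the homogeneous part is clean. The alternative Toeplitz-type argument you sketch is also valid and in fact closer in spirit to how such lemmas are often proved directly. You are right to point out that the operative hypothesis is $a<1$ (equivalently $\sum_l u_l=\infty$), which is what forces the products $\prod_{l}(1-u_l)$ to vanish; the lemma as transcribed here says only $a\ge 0$, but in the sole invocation at~\eqref{eq:vttailbd} one has $0<a<1$, so your restriction to that regime is appropriate.
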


\begin{lemma}
\label{lm:bddyanmics}
Consider a scalar dynamical system $\{m_t\}$ of the form 
\begin{align*}
    & \hat{m}_{t+1} = \Big(1 -  \frac{u_t}{m_t + v_t} \Big) m_t + w_t, \\
    & m_{t+1} = \max\big( |\hat{m}_{t+1}|, |m_t| \big), 
\end{align*}
where $m_0 > 0$ and $v_t$ is a positive decaying sequence, and 
\begin{align*}
    u_t = \frac{u_0}{(t+1)^{a}}, \ w_t = \frac{w_0}{(t+1)^{b}},
\end{align*}
for some positive constants $b > a, u_0, w_0$. Then, it satisfies that $\sup_{t > 0} m_t < \infty$.
\end{lemma}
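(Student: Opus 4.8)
The plan is to show that $\{m_t\}$ is nondecreasing and bounded below by $m_0>0$, and then that it is \emph{eventually constant}; boundedness follows immediately. First I would record, by a one-line induction using $m_0>0$ and $m_{t+1}=\max(|\hat m_{t+1}|,|m_t|)$, that $m_t\ge m_0>0$ for every $t$ and that $m_{t+1}\ge m_t$, so $\{m_t\}$ is monotone nondecreasing with a limit $m_\infty\in(0,\infty]$. Hence it suffices to exhibit a finite $T_0$ with $m_{t+1}=m_t$ for all $t\ge T_0$, since then $\sup_{t}m_t=m_{T_0}<\infty$.

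The core step is to control the drift term $\dfrac{u_t m_t}{m_t+v_t}$ once $m_t$ is bounded away from $0$. Writing $\hat m_{t+1}=m_t-\dfrac{u_t m_t}{m_t+v_t}+w_t$ and letting $V:=\sup_t v_t<\infty$ (finite since $\{v_t\}$ is decaying), I would use $m_t\ge m_0$ to bound $\dfrac{u_t m_t}{m_t+v_t}=\dfrac{u_t}{1+v_t/m_t}\ge\dfrac{u_t}{1+V/m_0}$. Because $b>a$, we have $w_t/u_t=(w_0/u_0)(t+1)^{a-b}\to 0$, so there is $T_1$ with $w_t\le\tfrac12\,\dfrac{u_t}{1+V/m_0}$ for $t\ge T_1$, giving $\hat m_{t+1}\le m_t-\tfrac12\dfrac{u_t}{1+V/m_0}<m_t$. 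On the other hand, $\hat m_{t+1}<0$ would force $\dfrac{u_t}{m_t+v_t}>1+w_t/m_t>1$, i.e.\ $u_t>m_t\ge m_0$; since $a>0$ gives $u_t\to 0$, there is $T_2$ with $u_t<m_0$ for $t\ge T_2$, so $\hat m_{t+1}\ge 0$ for such $t$. Taking $T_0:=\max(T_1,T_2)$, for $t\ge T_0$ we get $0\le\hat m_{t+1}\le m_t$, hence $m_{t+1}=\max(|\hat m_{t+1}|,|m_t|)=m_t$, so $m_t\equiv m_{T_0}$ for $t\ge T_0$ and $\sup_t m_t=\max\{m_0,\dots,m_{T_0}\}=m_{T_0}<\infty$.

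The hard part will not be analytic: once the lower bound $m_t\ge m_0>0$ is in place, everything reduces to comparing the three polynomially decaying rates $u_t,v_t,w_t$ via $b>a$ and $a>0$. The one point to be careful about is exactly that lower bound — it is what prevents the denominator $m_t+v_t$ from degenerating and keeps $\dfrac{u_t m_t}{m_t+v_t}$ comparable to $u_t$, and it is the sole role played by taking absolute values in the definition of $m_{t+1}$; without it the drift could vanish and the conclusion would fail, so I would state and prove that monotonicity/lower-bound claim carefully before anything else.
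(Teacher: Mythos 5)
Your proof is correct and follows essentially the same route as the paper's: both establish the lower bound $m_t \ge m_0 > 0$ via monotonicity, use it to keep the drift $u_t m_t/(m_t+v_t)$ comparable to $u_t$, invoke $b>a$ to show $w_t$ is eventually dominated by that drift so $\hat m_{t+1}\le m_t$, and check $\hat m_{t+1}\ge 0$ from $u_t\to 0$, concluding the sequence is eventually constant. The only cosmetic difference is your constant $1/(1+V/m_0)$ versus the paper's $m_{t_0}/(m_{t_0}+v_{t_0})$.
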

\begin{proof}
We prove the lemma by showing that there exists some finite $T$ such that for all $t \ge T$, $m_{t+1} = m_t$. Notice that a sufficient condition for $m_{t+1} = m_t$ is $|\hat{m}_{t+1}| \le |m_t|$ and $m_t > 0$.

By definition, $\{m_t\}$ is a nondecreasing positive sequence, so $m_t + v_t > m_0$. By the definition of $u_t$, take $t_0 = \lceil \big( u_0/m_0 \big)^{1/a} - 1\rceil$, then all $t > t_0$ we have $u_t/(m_t + v_t) < 1$. Then, for $t \ge t_0$, we have $m_t > 0, \hat{m}_t > 0$ and $|\hat{m}_{t+1} \le |m_t|$ reduces to $\hat{m}_{t+1} \le m_t$. 
\begin{align*}
    m_t - \hat{m}_{t+1} = \frac{u_t m_t}{ m_t + v_t} - w_t.
\end{align*}
Since $m_t$ is nondecreasing and $v_t$ is decaying, we have 
\begin{align*}
    \frac{m_t}{m_t + v_t} = \frac{1}{1 + (v_t/m_t)} \ge \frac{m_{t_0}}{m_{t_0} + v_{t_0}} := c_0 > 0. 
\end{align*}
Then, by the definitions of $u_t, w_t$, for
\begin{align*}
    t \ge t_0' := \max\Big(t_0, \lceil \big( \frac{w_0}{c_0 u_0}\big)^{\frac{1}{b - a}} - 1\rceil \Big), 
\end{align*}
we have
\begin{align*}
    m_t - \hat{m}_{t+1} \ge c_0 u_t - w_t > 0.
\end{align*}
Therefore, taking $T = t_0'$ we have for all $t \ge T$, $m_{t+1} = m_t$, and thus $\sup_{t \ge 0} m_t = m_T < \infty$. 
\end{proof}

\end{document}